\newtheorem{thm}{Theorem}[section]
\newtheorem{cor}[thm]{Corollary}%[section]
\newtheorem{lem}[thm]{Lemma}%[section]
\newtheorem{prop}[thm]{Proposition}%[section]
\numberwithin{equation}{section}
\DeclareMathOperator{\diam}{diam}
\newcommand{\eva}{\theta}
\newcommand{\evac}{\overline{\theta}}
\title{Trees with a large Laplacian~\linebreak eigenvalue multiplicity}% ... and spiders}
\author{S. Akbari}
\address{Department of
Mathematical Sciences, Sharif University of Technology, Iran}
\email{s\_akbari@sharif.edu}
\author{E.R. van Dam}
\address{Department of Econometrics and O.R., Tilburg University,
	 The Netherlands}
\email{Edwin.vanDam@uvt.nl}
\author{M.H. Fakharan}
\address{Department of
Mathematical Sciences, Sharif University of Technology, Iran}
\email{mh.fakharan92@student.sharif.ir}
\date{}
\begin{document}

\subjclass[2010]{05C50}

\keywords{Laplacian spectrum, trees, multiplicities of eigenvalues}

\begin{abstract}
In this paper, we study the multiplicity of the Laplacian eigenvalues of trees.
It is known that for trees, integer Laplacian eigenvalues larger than $1$ are simple and also the multiplicity of Laplacian eigenvalue $1$ has been well studied before.
Here we consider the multiplicities of the other (non-integral) Laplacian eigenvalues. We give an upper bound  and determine the trees of order $n$ that have a multiplicity that is close to the upper bound $\frac{n-3}{2}$, and emphasize the particular role of the algebraic connectivity.
\end{abstract}

\maketitle

\section{Introduction}

In this paper, we study the multiplicity of the Laplacian eigenvalues of trees. In particular, we are interested in upper bounds for these multiplicities, and trees with multiplicities that are close to these upper bounds.

Let $G$ be a graph with Laplacian eigenvalues $0=\mu_1\leq \cdots \leq \mu_n$. Fiedler \cite{fiedler} showed that $m_G(\mu_1)=1$ if and only if $G$ is connected, where $m_G(\mu)$ denotes the multiplicity of Laplacian eigenvalue $\mu$. Grone, Merris, and Sunder \cite[Prop.~2.2]{kamel} proved that $m_G(\mu_n)=1$ if $G$ is a connected bipartite graph (in particular, if it is a tree). On the other hand, for the complete graph $K_n$, the multiplicity of $\mu_n$ is large and indeed is $n-1$. How large can the multiplicities be when we restrict to trees? It is shown by Grone et al.~\cite[Thm.~2.1]{kamel} that if an integer $\mu \geq 2$ is a Laplacian eigenvalue of a tree $T$ of order $n$, then $m_T(\mu)=1$ and $\mu \mid n$. Interestingly, it is different for the Laplacian eigenvalue $1$, because $m_G(1)\geq p(G)-q(G)$ for all graphs $G$, where $p(G)$ and $q(G)$ are the number of pendant vertices and quasipendant vertices, respectively, see \cite[p.~263]{far}. For example, it follows that if $G$ is a star ($p(G)=n-1,q(G)=1$), then $m_G(1)=n-2$. Guo, Feng, and Zhang \cite{feng} showed that if $T$ is a tree of order $n$, then $m_T(1) \in S=\{0,\, 1,\, \ldots,\,n-5, n-4,\, n-2\}$ and for every integer $n$ and every $k\in S$ there exists a tree $T$ of order $n$ with $m_T(1)=k$. Also Barik, Lal, and Pati \cite{barik} studied the multiplicity of Laplacian eigenvalue $1$ in trees. In this paper we consider the multiplicities of the other (non-integral) Laplacian eigenvalues.

Our paper is further organized as follows. In Section \ref{secNotation}, we introduce some notation and  definitions. We then collect some relevant results from the literature in Section \ref{secSurvey}. In Section \ref{secExamples}, we introduce the family of trees that we will call spiders, as they will play a key role in the main part of the paper, Section \ref{secMAIN}. In this main part we will give an upper bound $\frac{n-3}{2}$ on the multiplicities of a Laplacian eigenvalue of a tree of order $n$, in particular for the algebraic connectivity, and characterize the trees that are close to this upper bound.
One main result (Theorem \ref{km2}) --- and starting point of several refinements --- is that for a tree $T$ of order $n\geq 6$, the multiplicity of a Laplacian eigenvalue $\mu \neq 1$ is at most $\frac{n-3}{2}$, and we characterize the trees that attain this bound. We will also characterize the trees that have a multiplicity that is close to the upper bound (in Theorems \ref{n/2-2} and \ref{n-1/2-2}), and show that there are finitely many trees besides a specific family of spiders for which the algebraic connectivity has a multiplicity within a fixed range from the upper bound (in Theorem \ref{thm:n/2-k}).
%We finish in Section \ref{sec:final} with some additional remarks about the multiplicity of Laplacian eigenvalue $1$.

\section{Preliminaries}
\subsection{Notation}\label{secNotation}

Let $G = (V(G),E(G))$ be a graph, where $V (G) = \{v_1, \ldots , v_n\}$ is the vertex set and $E(G)$ is the edge set. Throughout this paper all graphs are simple, that is, without loops and multiple edges. Let $d_G(v_i)$ be the degree of $v_i$. We denote the non-increasing vertex degree sequence of $G$ by $(d_1, \ldots , d_n)$. The maximum and minimum degree of $G$ are denoted by $\Delta(G)=d_1$ and $\delta(G)=d_n$, respectively.

The {\em adjacency matrix} of $G$, denoted by $A(G)$, is an $n\times n$ matrix whose $(i, j)$-entry is $1$ if $v_i$ and $v_j$ are adjacent and $0$ otherwise. We call $L(G) = D(G) - A(G)$ the {\it Laplacian matrix} of $G$, where $D(G)$ is the $n \times n$ diagonal matrix with $d_{ii}=d(v_i)$. The eigenvalues of $L(G)$ are called the {\it (Laplacian) eigenvalues}\footnote{Whenever we write about eigenvalues of a graph, we mean Laplacian eigenvalues (unless explicitly stated otherwise).} of $G$, and we denote these in increasing order by $0=\mu_1(G) \leq \cdots \leq \mu_n(G)$. The multiset of eigenvalues of $L(G)$ is called the (Laplacian) spectrum of $G$. Fiedler \cite{fiedler} called $\mu_2$ the {\it algebraic connectivity} of $G$. The multiplicity of a Laplacian eigenvalue $\mu$ in a graph $G$ is denoted by $m_G(\mu)$; the number of Laplacian eigenvalues of $G$ in an interval $I$ is denoted by $m_GI$. We say that two (or more) Laplacian eigenvalues are conjugates (of each other) if they are roots of the same irreducible factor of the characteristic polynomial of the Laplacian (over the rationals). Conjugate eigenvalues have the same multiplicity.

A {\em star} (graph) is a complete bipartite graph $K_{1,n}$, for some positive integer $n$. We let $P_n$ be the path of order $n$. The {\em diameter} of $G$ is  denoted by $\diam(G)$. A vertex of degree one is called a {\em pendant vertex} and a vertex that is adjacent to at least one pendant vertex is called a {\em quasipendant vertex}. The number of pendant and quasipendant vertices of a graph $G$ are denoted by $p(G)$ and $q(G)$, respectively.

%A set $S \subseteq V(G)$ is called dominating if every $v \in V(G) \setminus S$ is adjacent to some vertex in $S$. The domination number $\gamma(G)$ is the minimum size of a dominating set.

In all of the above notation, we remove the additional $G$ or $T$ if there is no ambiguity; for example $m(\mu)$ instead of $m_T(\mu)$, or $V$ instead of $V(G)$.

\subsection{A collection of elementary results}\label{secSurvey}

In this section we collect and extend some relevant basic lemmas from the literature.
We start with some general ones. For other basic results, we refer to the books by Brouwer and Haemers \cite{BH}, Cvetkovi\'{c}, Doob, and Sachs \cite{CDS}, and Cvetkovi\'{c}, Rowlinson, and Simi\'{c} \cite{cvet}.

\begin{lem}\label{cve}
{\em \cite[Prop.~7.5.6]{cvet}.} If $G$ is a connected graph with $r$ distinct Laplacian eigenvalues, then $\diam(G)\leq r-1$.
\end{lem}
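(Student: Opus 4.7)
My plan is to use the minimal polynomial of the Laplacian matrix and a standard argument comparing powers of $L$ to walk counts. Set $L = L(G)$, let $d = \diam(G)$, and suppose for contradiction that $d \geq r$. Since $L$ has exactly $r$ distinct eigenvalues, its minimal polynomial has degree $r$, so $L^r$ lies in the span of $I, L, L^2, \ldots, L^{r-1}$; in particular, every entry $(L^d)_{uv}$ (for $d \geq r$) is a rational linear combination of the corresponding entries $(L^k)_{uv}$ with $0 \leq k \leq r-1$.

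Next, I would pick two vertices $u, v$ with $d_G(u,v) = d$ and analyze $(L^k)_{uv}$ for $u \neq v$ directly. Expanding $L^k$ as a sum over sequences $u = w_0, w_1, \ldots, w_k = v$, each nonzero term requires every consecutive pair $w_i w_{i+1}$ to be either equal (contributing a diagonal factor $d_G(w_i)$) or adjacent (contributing $-1$). Since stationary steps do not change position, a walk of length $k$ from $u$ to $v$ must use at least $d_G(u,v) = d$ genuine edge steps. Hence $(L^k)_{uv} = 0$ whenever $k < d$. On the other hand, for $k = d$ no stationary steps are affordable, so only actual shortest walks from $u$ to $v$ contribute, each with sign $(-1)^d$; as at least one shortest path exists, $(L^d)_{uv} = (-1)^d \cdot (\text{positive integer}) \neq 0$.

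Combining the two: all entries $(I)_{uv}, (L)_{uv}, \ldots, (L^{r-1})_{uv}$ vanish (note $d \geq r \geq 2$ forces $u \not\sim v$ and $u \neq v$), so any linear combination of them also vanishes, contradicting $(L^d)_{uv} \neq 0$. Therefore $\diam(G) \leq r - 1$. The edge case $r = 1$ is trivial because connectedness forces $G = K_1$, and $\diam(K_1) = 0$.

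The only slightly delicate point is the entry-level sign analysis of $(L^d)_{uv}$, since, unlike the adjacency matrix, $L$ has nonzero diagonal entries that could in principle cause cancellation; the key observation is simply that at the minimum length $k = d$ no diagonal (stationary) factors can appear, so all contributing walks share the same sign and no cancellation occurs.
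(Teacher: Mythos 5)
Your proof is correct and complete: the minimal-polynomial argument combined with the careful observation that at length $k=d$ no diagonal (stationary) factors can occur, so all contributing terms share the sign $(-1)^d$, is exactly the standard argument for this fact. The paper itself gives no proof but only cites \cite[Prop.~7.5.6]{cvet}, and your argument is essentially the one found there (the Laplacian analogue of the classical adjacency-matrix diameter bound), so there is nothing to add.
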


 \begin{lem}\label{haem}
{\em \cite[Thm.~1]{haemer}}. Let $G$ be a graph of order $n$, with vertex degrees $d_1\geq \cdots \geq d_n$ and Laplacian eigenvalues $\mu_1 \leq \cdots \leq \mu_n$. If $G$ is not $K_m \cup (n-m)K_1$, then $\mu_{n-m+1}\geq d_m - m+2$ for $1\leq m \leq n$. In particular, if $G$ has at least one edge, then $\mu_n\geq d_1+1$ and if $G$ has at least two edges, then $\mu_{n-1} \geq d_2$.
\end{lem}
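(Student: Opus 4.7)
This is Haemers' classical interlacing inequality \cite{haemer}. My plan is to apply Cauchy eigenvalue interlacing to a principal submatrix of $L(G)$ and then close a one-unit gap using the hypothesis.

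Let $V' = \{v_1, \ldots, v_m\}$ realise the $m$ largest degrees, let $H = G[V']$ be the induced subgraph, and let $B$ be the $m \times m$ principal submatrix of $L(G)$ indexed by $V'$. Then
\[
B \;=\; L(H) + \mathrm{diag}\bigl(d_i - d_H(v_i)\bigr)_{i=1}^{m},
\]
and Cauchy interlacing gives $\mu_{n-m+1}(G) \geq \beta_1(B)$, where $\beta_1(B)$ denotes the smallest eigenvalue of $B$. Since $L(H) \succeq 0$ and each diagonal correction satisfies $d_i - d_H(v_i) \geq d_m - (m-1)$, Weyl's inequality immediately yields the weaker bound $\mu_{n-m+1}(G) \geq d_m - m + 1$.

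To obtain the additional unit I would split on the structure around $V'$. If $d_m \leq m-2$ the asserted bound $d_m - m + 2 \leq 0$ is trivial, so assume $d_m \geq m-1$. If no vertex of $V'$ has a neighbour in $V \setminus V'$, then every $v_i$ has all its $\geq m - 1$ neighbours inside the $m$-vertex set $V'$, which forces $H = K_m$ and makes $V'$ a connected component of $G$ isomorphic to $K_m$. The exclusion $G \neq K_m \cup (n-m)K_1$ then forces $G[V \setminus V']$ to contain an edge, so that $L(G) = L(K_m) \oplus L(G')$; the spectrum of $L(K_m)$ contributes $m - 1$ copies of $m$ and that of $L(G')$ contributes $\mu_n(G') \geq 2$ (by the $m = 1$ case applied to $G'$), giving $\mu_{n-m+1}(G) \geq 2 \geq d_m - m + 2$ (since $d_m = m - 1$ in this subcase). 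Otherwise some $v_i \in V'$ has a neighbour $u \in V \setminus V'$, and I would extract the extra unit either by comparing $L(G)$ with the Laplacian of an explicit spanning subgraph of $G$ via Weyl (for $m = 1$, the spanning star $K_{1, d_1}$ has $\mu_n = d_1 + 1$, and the bound transfers directly to $G$), or by applying Cauchy interlacing to the $(m+1) \times (m+1)$ principal submatrix on $V' \cup \{u\}$ and bounding its second smallest eigenvalue, following the original argument in \cite{haemer}.

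The ``in particular'' statements are the specialisations $m = 1$ (exclusion: $G$ has at least one edge, yielding $\mu_n \geq d_1 + 1$) and $m = 2$ (exclusion: $G$ has at least two edges, yielding $\mu_{n-1} \geq d_2$). The main obstacle is the final sub-case just described: naive Cauchy interlacing on $B$ is tight precisely on the excluded configuration, so the extra unit cannot come from that alone and must be extracted either by a carefully chosen auxiliary spanning subgraph with explicit Laplacian spectrum, or by Haemers' refined interlacing on an augmented submatrix — this is the technical core of the proof.
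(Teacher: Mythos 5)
The paper does not actually prove this lemma: it is quoted directly from Brouwer and Haemers \cite{haemer} (their Theorem~1), so there is no internal argument to compare against. Judged on its own terms, your attempt contains a genuine gap. The reductions you do carry out are fine: the interlacing inequality $\mu_{n-m+1}(G)\geq \beta_1(B)$ for the $m\times m$ principal submatrix $B$ on the top-degree vertices, the decomposition $B=L(H)+\mathrm{diag}(d_i-d_H(v_i))$ giving the weaker bound $d_m-m+1$, the trivial case $d_m\leq m-2$, and the case where $V'$ is a $K_m$-component (where the exclusion hypothesis is used correctly). But the remaining case --- some $v_i\in V'$ has a neighbour outside $V'$ --- is precisely where the entire content of the theorem lies, and there you only gesture at two possible strategies (a spanning subgraph comparison, or interlacing on an $(m+1)\times(m+1)$ submatrix) without executing either, explicitly deferring to ``the original argument in \cite{haemer}''. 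That is not a proof of the case; it is a citation of the result you are trying to prove.

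Moreover, your closing diagnosis that ``naive Cauchy interlacing on $B$ is tight precisely on the excluded configuration'' is false, which matters because it suggests the gap is smaller than it is. Take $G=P_3$ and $m=2$: then $d_2=1$, the claimed bound is $\mu_{n-1}\geq d_2-m+2=1$ (and indeed $\mu_2(P_3)=1$), but the $2\times 2$ principal submatrix on the two highest-degree vertices has smallest eigenvalue $\frac{3-\sqrt5}{2}\approx 0.38$. So on a perfectly ordinary non-excluded graph the principal-submatrix bound falls strictly short of the target by more than rounding; the missing unit genuinely requires a different mechanism (in \cite{haemer} this is done via an induction on $m$ combined with vertex/edge interlacing), and your sketch does not supply it. For $m=1$ your spanning-star argument ($K_{1,d_1}\subseteq G$ implies $\mu_n(G)\geq d_1+1$ by edge interlacing, Lemma~\ref{kame3}) is complete, but that special case is classical and is not where the difficulty lies.
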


We recall that a sequence $\mu_1 \geq \cdots \geq \mu_m$ interlaces another sequence $\lambda_1 \geq \cdots \geq \lambda_n$ with $m < n$ whenever $\lambda_i \geq \mu_i \geq \lambda_{n-m+i}$, for $i = 1, \ldots , m$. It is well known \cite[Thm.~1]{hwa} that the eigenvalues of a principal submatrix of a Hermitian matrix $M$ interlace the eigenvalues of $M$. For the Laplacian eigenvalues, we moreover have the following two specific results.

\begin{lem}\label{kame3}
{\em \cite[Thm.~4.1]{kamel}.} Let $\tilde{G}$ be a graph of order $n$ and let $G$ be a (spanning) subgraph of $\tilde{G}$ obtained by removing just one of its edges. Then the $n-1$ largest Laplacian eigenvalues of $G$ interlace the Laplacian eigenvalues of $\tilde{G}$.
\end{lem}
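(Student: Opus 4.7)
The plan is to exploit the low-rank structure of the perturbation separating $L(G)$ from $L(\tilde G)$. If the single edge removed is $e=\{u,v\}$, then one has the identity
$$L(\tilde G) = L(G) + (e_u-e_v)(e_u-e_v)^\top,$$
where $e_u, e_v$ are standard basis vectors of $\mathbb{R}^n$. Thus $L(\tilde G) - L(G)$ is positive semidefinite of rank one, and this is the only structural input I shall need.

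The key step is to invoke the standard rank-one interlacing theorem for real symmetric matrices: if $B = A + xx^\top$ with $A$ symmetric, then in nondecreasing order
$$\mu_i(A) \leq \mu_i(B) \leq \mu_{i+1}(A) \qquad (1 \leq i \leq n-1),$$
together with $\mu_n(A) \leq \mu_n(B)$. The left inequality is immediate from Weyl's theorem since $xx^\top \succeq 0$; the right inequality follows from the Courant--Fischer min-max principle applied to an $i$-dimensional subspace contained in $\mathrm{span}\{v_1,\ldots,v_{i+1}\}\cap x^\perp$, where $v_1,\ldots,v_{i+1}$ are eigenvectors of $A$ corresponding to its $i+1$ smallest eigenvalues, since $B$ and $A$ coincide as quadratic forms on the hyperplane $x^\perp$.

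Applying this with $A = L(G)$, $B = L(\tilde G)$, and $x = e_u - e_v$ will give
$$\mu_i(G) \leq \mu_i(\tilde G) \leq \mu_{i+1}(G) \qquad (1 \leq i \leq n-1),$$
which, rewritten as $\mu_i(\tilde G) \leq \mu_{i+1}(G) \leq \mu_{i+1}(\tilde G)$ for $i = 1, \ldots, n-1$, is precisely the statement that the $n-1$ largest eigenvalues $\mu_2(G) \leq \cdots \leq \mu_n(G)$ of $G$ interlace $\mu_1(\tilde G) \leq \cdots \leq \mu_n(\tilde G)$ in the sense introduced in Section~\ref{secNotation}.

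No significant obstacle is expected. The only nontrivial ingredient is the classical rank-one interlacing inequality above; alternatively, one may cite directly the general Weyl-type fact that if $A$ is symmetric and $P$ is positive semidefinite of rank $r$, then $\mu_i(A) \leq \mu_i(A+P) \leq \mu_{i+r}(A)$ for $i \leq n-r$, and specialize to $r = 1$.
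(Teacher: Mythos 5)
Your proof is correct: the identity $L(\tilde G)=L(G)+(e_u-e_v)(e_u-e_v)^\top$ and the rank-one interlacing inequality $\mu_i(A)\leq\mu_i(A+xx^\top)\leq\mu_{i+1}(A)$ do yield exactly the interlacing of the $n-1$ largest eigenvalues of $G$ with those of $\tilde G$ as defined in Section~\ref{secNotation}. The paper gives no proof of its own here --- it simply cites Grone, Merris, and Sunder --- and your argument is the standard rank-one perturbation proof of that cited theorem, so nothing further is needed.
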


A consequence of this is the following.

\begin{lem}\label{kame22}
{\em \cite[Cor.~4.2]{kamel}.} Let $v$ be a pendant vertex of $\tilde{G}$ and let $G=\tilde{G}\setminus v$. Then the Laplacian eigenvalues of $G$ interlace the Laplacian eigenvalues of $\tilde{G}$.
\end{lem}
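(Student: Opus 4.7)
The plan is to obtain this corollary from Lemma \ref{kame3} by splitting the deletion of the pendant vertex $v$ into two moves: first removing the unique edge incident with $v$, and then deleting the now-isolated vertex. This reduces the statement to a careful bookkeeping of which $n-1$ eigenvalues of the intermediate graph are the ``largest'' ones.

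Concretely, I let $e$ denote the edge of $\tilde{G}$ incident with $v$ and set $H = \tilde{G} - e$. Because $v$ was pendant, $H$ is the disjoint union of $G$ and the isolated vertex $\{v\}$, so its Laplacian matrix is block-diagonal with blocks $L(G)$ and $(0)$. Hence the Laplacian spectrum of $H$ is the multiset union of the spectrum of $G$ with one extra eigenvalue $0$ coming from $v$. Since $G$ is itself a graph on $n-1\geq 1$ vertices, its spectrum already contains $0$ as the smallest eigenvalue, and consequently the $n-1$ largest Laplacian eigenvalues of $H$ (obtained by discarding the single smallest $0$) are precisely the $n-1$ Laplacian eigenvalues of $G$.

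Applying Lemma \ref{kame3} to $\tilde{G}$ and $H = \tilde{G} - e$ then says exactly that these $n-1$ largest eigenvalues of $H$ --- that is, the full Laplacian spectrum of $G$ --- interlace the Laplacian eigenvalues of $\tilde{G}$, which is the desired conclusion. The proof is essentially free of obstacles; the only point that demands any care is the observation that the ``extra'' $0$ produced by isolating $v$ matches the $0$ already present in $\mathrm{spec}(G)$, so that dropping the minimum eigenvalue of $H$ cleanly returns $\mathrm{spec}(G)$ and lets us invoke Lemma \ref{kame3} with no leftover indexing shift.
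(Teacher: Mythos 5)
Your proof is correct and is precisely the derivation the paper intends: the lemma is stated there as ``a consequence'' of Lemma~\ref{kame3}, obtained by deleting the pendant edge to form $G\cup K_1$ and noting that discarding the smallest eigenvalue (one copy of $0$) of this union leaves exactly the spectrum of $G$. Your careful observation that the extra $0$ from the isolated vertex absorbs the indexing shift is the only point of substance, and you handle it correctly.
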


The following results concern multiplicities.

% \begin{lem}\label{card}
%{\em \cite[Cor.~1]{cardoso}.} Let $G$ be a graph of order $n$ without isolated vertices. If $G$ has domination number $\gamma(G)$, then $m_G[0,1)\leq \gamma(G)\leq m_G[2,n]$.
%\end{lem}

\begin{lem}\label{kame}
\cite[p.~263]{far}. Let $G$ be a graph with $p$ pendant vertices and $q$ quasipendant vertices. Then $m_G(1)\geq p-q$.
\end{lem}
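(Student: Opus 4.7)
The plan is to exhibit explicitly $p-q$ linearly independent eigenvectors of $L(G)$ with eigenvalue $1$. The construction will group the pendant vertices by the quasipendant vertex to which they are attached, and produce, for each quasipendant with $k$ attached pendants, exactly $k-1$ independent eigenvectors supported on that group.

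First I would set up notation. Let $w_1,\ldots,w_q$ be the quasipendant vertices of $G$, and for each $w_j$ let $k_j$ be the number of pendant neighbors of $w_j$, say $u_j^{(1)},\ldots,u_j^{(k_j)}$. Note that $\sum_{j=1}^q k_j = p$, since every pendant vertex is attached to exactly one (quasipendant) vertex. For each $j$ with $k_j \geq 2$ and each $i \in \{2,\ldots,k_j\}$, I would define a vector $x^{(j,i)} \in \mathbb{R}^{V(G)}$ by setting $x^{(j,i)}_{u_j^{(1)}} = 1$, $x^{(j,i)}_{u_j^{(i)}} = -1$, and $0$ on every other vertex (including $w_j$ itself).

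Next I would verify that $L(G)x^{(j,i)} = x^{(j,i)}$ by checking entries. At each pendant vertex $u$ with $x^{(j,i)}_u = \pm 1$, the unique neighbor is $w_j$, on which $x^{(j,i)}$ vanishes, so $(Lx^{(j,i)})_u = 1 \cdot x^{(j,i)}_u - 0 = x^{(j,i)}_u$. At $w_j$ itself, the vector vanishes and the sum over neighbors telescopes to $1+(-1)=0$. At every remaining vertex, neither it nor any of its neighbors lies in the support, so both sides are $0$. Linear independence of the family $\{x^{(j,i)}\}$ is immediate: for each fixed $j$ the vectors have support inside the $k_j$ pendants attached to $w_j$ (which are pairwise disjoint across $j$), and within this group the $k_j-1$ vectors are independent because their restrictions to $\{u_j^{(2)},\ldots,u_j^{(k_j)}\}$ are the standard negative basis. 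Summing the counts gives $\sum_{j=1}^q (k_j-1) = p - q$ independent eigenvectors for eigenvalue $1$, yielding $m_G(1)\geq p-q$.

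There is no real obstacle here; the only subtlety is making sure the eigenvector equation is verified at the quasipendant vertex $w_j$, where the contributions from the two non-zero pendants cancel regardless of the other neighbors of $w_j$ (since those other neighbors are given value $0$). This is exactly why the construction uses differences supported on pairs of pendants sharing a common quasipendant, rather than a single pendant indicator.
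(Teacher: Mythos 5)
Your proof is correct and is essentially the same argument the paper gives: it notes that for each pair of pendant vertices sharing a quasipendant neighbor, the difference of their characteristic vectors is a $1$-eigenvector of $L(G)$, yielding $p-q$ independent eigenvectors. Your write-up just carries out the entrywise verification and the independence count in more detail.
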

This result follows from the observation that for every pair $(v_i,v_j)$ of pendant vertices that is adjacent to the same quasipendant vertex, the difference $e_i-e_j$ of the corresponding characteristic vectors is an  eigenvector of $L(G)$ for eigenvalue $1$. This gives $p-q$ linearly independent eigenvectors.% for eigenvalue $1$.

Finally, we have some specific results for trees.

\begin{lem}\label{Tm2p}
{\em \cite[Thm.~2.3]{kamel}.} Let $T$ be a tree with $p$ pendant vertices. If $\mu$ is a Laplacian eigenvalue of $T$, then $m_T(\mu) \leq p-1$.
\end{lem}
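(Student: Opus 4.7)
I plan to prove the bound by induction on $n=|V(T)|$, with the main tool being Lemma~\ref{kame22}: removing a pendant vertex can decrease the multiplicity of any Laplacian eigenvalue by at most one. The base case $n=2$ is immediate. In the inductive step, I dispose of the extreme shapes directly: for the path $T=P_n$ one has $p=2$ and a simple Laplacian spectrum, so $m_T(\mu)=1=p-1$; for the star $T=K_{1,n-1}$ the spectrum $\{0,1^{(n-2)},n\}$ yields $m_T(\mu)\le n-2=p-1$. From now on I may assume $T$ is neither a path nor a star, so $T$ has a vertex of degree $\ge 3$ and $p\ge 3$.

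First, suppose some non-pendant vertex $u$ has $k\ge 2$ pendant neighbors. Since $T$ is not a star, $d_T(u)\ge k+1$, so after deleting $k-1$ of these pendants the resulting tree $T'$ still has $u$ as a non-pendant vertex and has exactly $p-k+1$ pendants. The inductive hypothesis gives $m_{T'}(\mu)\le p-k$, and $k-1$ applications of Lemma~\ref{kame22} yield $m_T(\mu)\le m_{T'}(\mu)+(k-1)\le p-1$. Otherwise, if some pendant $v$ has a neighbor $u$ with $d_T(u)\ge 3$, then $T\setminus v$ has $p-1$ pendants, and one interlacing step together with the inductive hypothesis gives $m_T(\mu)\le(p-2)+1=p-1$.

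The remaining case, and the main obstacle, is when every pendant has a neighbor of degree exactly $2$ and no vertex has two pendant neighbors. Here the difficulty is that if $v$ is pendant with degree-$2$ neighbor $u$, then $T\setminus v$ still has $p$ pendants (the pendants of $T$ other than $v$, together with the new pendant $u$), so induction does not apply after a single pendant deletion. To handle this, fix a pendant $v$ and let $v=z_0,z_1,\ldots,z_{k-1},z_k$ be its \textit{leg}, meaning $d_T(z_i)=2$ for $1\le i\le k-1$ and $d_T(z_k)\ge 3$; such a $z_k$ exists because $T$ is not a path. Writing $L=\{z_0,\ldots,z_{k-1}\}$, I aim to prove the strengthened bound
\[
m_T(\mu)\le m_{T\setminus L}(\mu)+1,
\]
which suffices because $T\setminus L$ is a tree with only $p-1$ pendants (the vertex $z_k$ has residual degree $\ge 2$) and fewer vertices, so the inductive hypothesis yields $m_{T\setminus L}(\mu)\le p-2$.

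To establish the strengthened bound, I would iterate the standard pendant-deletion recursion $\phi_G(\lambda)=(\lambda-1)\phi_{G-v}(\lambda)-\lambda\,\phi_{G-v-u}(\lambda)$ for the Laplacian characteristic polynomial along the leg $L$, obtaining an identity
\[
\phi_T(\lambda)=A_k(\lambda)\,\phi_{T\setminus L}(\lambda)-\lambda B_k(\lambda)\,\phi_{T\setminus L\setminus z_k}(\lambda)
\]
with Chebyshev-like polynomials $A_k,B_k$. A careful comparison of the orders of vanishing of the two terms at $\lambda=\mu$ should then show that $m_T(\mu)$ exceeds $m_{T\setminus L}(\mu)$ by at most $1$. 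The main difficulty is the polynomial bookkeeping in the degenerate subcases where $A_k(\mu)=0$ or $\mu\in\{0,1\}$: one has to rule out that cancellation between the two terms inflates the order of vanishing beyond $m_{T\setminus L}(\mu)+1$.
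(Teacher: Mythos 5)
The paper offers no proof of this lemma --- it is quoted from Grone, Merris, and Sunder \cite{kamel} --- so I am judging your argument on its own. Your inductive skeleton is sound: reducing to trees with a vertex of degree at least $3$, and then deleting a whole pendant leg $L$ so that the number of pendant vertices drops by exactly one, is the right move, and the strengthened inequality $m_T(\mu)\le m_{T\setminus L}(\mu)+1$ is indeed true (it also subsumes your first two cases, which are just legs of length one). The genuine gap is that you never prove this inequality. You propose to iterate a characteristic-polynomial recursion and then ``compare orders of vanishing,'' while conceding that you cannot yet rule out cancellation between the two terms; that concession is precisely the unproved content of the step. Moreover, the recursion you plan to iterate, $\phi_G(\lambda)=(\lambda-1)\phi_{G-v}(\lambda)-\lambda\,\phi_{G-v-u}(\lambda)$, is not a valid identity for the Laplacian characteristic polynomial: for $P_3$ with $v$ an end vertex it gives $\lambda(\lambda^2-4\lambda+2)$ rather than $\lambda(\lambda-1)(\lambda-3)$. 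It is modelled on the adjacency recursion; for the Laplacian, deleting a pendant vertex changes the degree of its neighbour, so the principal submatrix of $L(G)$ is not $L(G-v)$ and the bookkeeping is more delicate than you allow.

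The missing idea is an eigenvector argument that makes the strengthened bound immediate and avoids all degenerate subcases. Write $L=\{z_0,\dots,z_{k-1}\}$ with $z_0=v$ pendant and $d_T(z_k)\ge 3$. On the eigenspace $E_\mu(T)$ consider the linear functional $x\mapsto x_{z_0}$; its kernel $K$ has dimension at least $m_T(\mu)-1$. For $x\in K$, the eigenvalue equations at $z_0,\dots,z_{k-1}$ read $x_{z_1}=(1-\mu)x_{z_0}$ and $x_{z_{i+1}}=(2-\mu)x_{z_i}-x_{z_{i-1}}$, so $x_{z_0}=0$ forces $x$ to vanish on all of $L$ and at $z_k$. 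The restriction of such an $x$ to $V(T)\setminus L$ then satisfies the eigenvalue equations of $L(T\setminus L)$ at every vertex (the only vertex whose degree changes is $z_k$, where $x$ vanishes), and the restriction map $K\to E_\mu(T\setminus L)$ is injective because $x$ vanishes on $L$. Hence $m_T(\mu)-1\le m_{T\setminus L}(\mu)\le p-2$ by induction, since $T\setminus L$ has exactly $p-1$ pendant vertices. With this replacement for your polynomial computation, the proof closes.
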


The following is a clear generalization to non-integral eigenvalues of a result by Grone, Merris, and Sunder \cite[Thm.~2.15]{kamel}. It uses the Matrix-Tree theorem, which states that any cofactor of the Laplacian matrix of $G$ equals the number of spanning trees of $G$. In case of trees, this is equivalent to the fact that the product of all non-zero Laplacian eigenvalues equals the number of vertices; we shall use this also later.

\begin{lem}\label{van}
Let $\mu$ be a Laplacian eigenvalue of a tree $T$ with $m_T(\mu)>1$. Then the product of $\mu$ and its conjugate eigenvalues equals $1$. In particular, if $\mu$ is an integer, then $\mu=1$.
\end{lem}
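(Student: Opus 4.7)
The plan is to fix any vertex $v$ of $T$ and work with the principal submatrix $L_v$ of $L(T)$ obtained by deleting the row and column indexed by $v$. By the Matrix-Tree theorem, the principal cofactor $\det L_v$ equals the number of spanning trees of $T$, which is $1$. Hence the characteristic polynomial $\phi_v(x) := \det(xI - L_v) \in \mathbb{Z}[x]$ has constant term $\phi_v(0) = (-1)^{n-1}$. This is the crucial source of a ``unit'' in the argument, and it replaces the usual role of the matrix-tree product $\prod_{i\geq 2}\mu_i = n$.

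Next, I would let $f(x) \in \mathbb{Z}[x]$ be the monic minimal polynomial of $\mu$ over $\mathbb{Q}$ (it lies in $\mathbb{Z}[x]$ by Gauss's lemma, as $\mu$ is an algebraic integer). The product of $\mu$ together with its conjugates equals $(-1)^{\deg f} f(0)$, and each root of $f$ is a positive real number (all of them belong to the spectrum of $L(T)$, and none is zero since $f(0)\neq 0$). Thus the product is automatically positive, and proving the lemma reduces to showing $|f(0)|=1$. Once this is in hand, the ``in particular'' clause is immediate: for integer $\mu$ one has $f(x)=x-\mu$, so the product of conjugates is $\mu$ itself, forcing $\mu=1$.

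The heart of the proof is an interlacing step. Since $L_v$ is a principal submatrix of the real symmetric matrix $L(T)$, its eigenvalues interlace those of $L(T)$; combined with the hypothesis $m := m_T(\mu) \geq 2$, this forces $\mu$ to be an eigenvalue of $L_v$ of multiplicity at least $m-1 \geq 1$. Hence $(x-\mu)^{m-1} \mid \phi_v(x)$ in $\mathbb{C}[x]$, and because $\phi_v$ is monic in $\mathbb{Z}[x]$ with the same irreducible factorization over $\mathbb{Q}$ and over $\mathbb{Z}$ (Gauss again), we get $f(x)^{m-1} \mid \phi_v(x)$ in $\mathbb{Z}[x]$. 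Evaluating at $x=0$ yields $f(0)^{m-1} \mid \phi_v(0) = \pm 1$, and since $f(0)$ is a nonzero integer and $m-1 \geq 1$, we conclude $|f(0)|=1$, as required.

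I do not foresee a substantial obstacle. The only slightly delicate point is the promotion from ``$\mu$ is a root of $\phi_v$ of multiplicity at least $m-1$'' to the polynomial divisibility ``$f^{m-1} \mid \phi_v$ in $\mathbb{Z}[x]$'', but this is routine given that $f$ is the $\mathbb{Q}$-minimal polynomial of $\mu$ and $\phi_v$ is monic with integer coefficients. Note also how sharp the hypothesis $m_T(\mu) > 1$ is in this scheme: it is exactly what guarantees the exponent $m-1$ is positive, so that a single copy of $f$ already forces $f(0)$ to be a unit in $\mathbb{Z}$.
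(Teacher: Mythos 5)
Your proof is correct and follows essentially the same route as the paper's: both delete a single vertex, invoke the Matrix--Tree theorem to see that the principal cofactor equals $1$, and show that $\mu$ persists as an eigenvalue of the $(n-1)\times(n-1)$ principal submatrix, so that its minimal polynomial must have constant term $\pm 1$. The only cosmetic difference is that the paper exhibits an eigenvector of $L(T)$ vanishing at a pendant vertex, whereas you use Cauchy interlacing to keep $\mu$ with multiplicity $m-1$; both correctly exploit that $m_T(\mu)>1$ is exactly what prevents $\mu$ from disappearing after the deletion.
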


\begin{proof}
Let $v$ be a pendant vertex. Then the Laplacian matrix of $T$ has the form
$$L(T)=\begin{bmatrix}
B& *\\
*& 1
\end{bmatrix},$$
where $B$ is the principal submatrix corresponding to $T\setminus v$. Since $m_T(\mu)>1$, there is an eigenvector $x$ of $L(T)$ for $\mu$ such that its last component is $0$.  If $x'$ is the vector obtained from $x$ by deleting the last component, then it is not hard to see that $Bx'=\mu x'$. So $\mu$ is an eigenvalue of $B$ as well (and so are its conjugates). By the Matrix-Tree Theorem however, we have that $\det(B)=1$, and the result follows.
\end{proof}

To conclude this section, we mention a bound for the multiplicity of the algebraic connectivity of a tree. It follows easily from a result of Grone and Merris \cite{merris}.
\begin{prop}\label{delta}
Let $T$ be a tree with   $\Delta \geq 2$. Then $m_T(\mu_2)\leq \Delta -1$.
\end{prop}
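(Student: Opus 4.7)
When $m := m_T(\mu_2) = 1$, the inequality $m \leq \Delta - 1$ is immediate from the hypothesis $\Delta \geq 2$. I therefore assume $m \geq 2$; since it suffices to exhibit a vertex $v$ of $T$ with $d(v) \geq m + 1$ (then $\Delta \geq d(v) \geq m + 1$ yields the claim), the plan is to produce such a $v$.

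The plan is to invoke the Grone--Merris structural result on Fiedler eigenvectors of a tree: when the multiplicity of $\mu_2$ is at least two, the common zero set of all eigenvectors for $\mu_2$ contains a so-called characteristic vertex $v$, and the degree of this $v$ is at least $m + 1$. This finishes the proof immediately.

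Should the cited result be stated in a weaker form, the degree bound can be reconstructed as follows. The Parter--Wiener theorem, applied to $L(T)$ (whose underlying graph is a tree), provides a vertex $v$ of degree at least $3$ such that $\mu_2$ appears in the principal submatrix $L' := L(T)[V \setminus \{v\}]$ with multiplicity $m + 1$. Now $L'$ is block diagonal with one block $M_i = L(T_i) + e_{u_i} e_{u_i}^{\top}$ per component $T_i$ of $T \setminus v$, where $u_i$ is the neighbor of $v$ in $T_i$. A Cauchy interlacing count between $L'$ and $L(T)$ (using that $\mu_2$ has multiplicity $m$ in $L(T)$ and $m+1$ in $L'$) forces $\mu_2$ to be the smallest eigenvalue of $L'$, hence of each participating block $M_i$. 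Each $M_i^{-1}$ is the entrywise positive bottleneck matrix of the rooted subtree $T_i$, so Perron--Frobenius makes the smallest eigenvalue of $M_i$ simple; thus every block contributes at most one $\mu_2$, forcing $d(v) \geq m + 1$.

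The main obstacle is the interlacing step: a generic eigenvalue of multiplicity $m$ need not coincide with the smallest eigenvalue of $L'$, but for the algebraic connectivity the count does force this, which is exactly where the special role of $\mu_2$ enters. Once this is combined with Perron--Frobenius applied to the bottleneck matrix of each rooted subtree, the degree inequality $d(v) \geq m + 1$, and hence the proposition, follows at once.
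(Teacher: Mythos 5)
Your proof is correct and follows essentially the same route as the paper: both reduce to the case $m \geq 2$ and invoke the Grone--Merris theorem that the characteristic vertex of a type~I tree has degree at least $m+1$, whence $\Delta \geq m+1$. The additional Parter--Wiener/bottleneck-matrix reconstruction you sketch is not needed, since the cited result already gives the degree bound directly.
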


\begin{proof}
Let $m=m_T(\mu_2)$, and suppose that $m\geq \Delta$. Because the multiplicity is at least $2$, there is an eigenvector that has value $0$ corresponding to  one of the vertices, and so $T$ is a so-called type I tree (see \cite{merris}). By \cite[Thm.~2]{merris}, the so-called characteristic vertex of $T$ has degree at least $m+1$, but this is at least $\Delta+1$, which is a contradiction.
\end{proof}

\section{Spiders and their spectra}\label{secExamples}

In this section we define two families of trees that are most relevant to our results. We start with the main one, i.e., the family of trees that have large multiplicities for some non-integral Laplacian eigenvalues.

The spider $T(s,k)$, with $1\leq k \leq s$, is defined as in Figure \ref{12star}. It is obtained from the star $K_{1,s}$ by extending $k$ of its rays (legs) by an extra edge, and has $n=s+k+1$ vertices. We say that the spider has $k$ legs.

\begin{figure}[h]
\centering
\includegraphics[width=60mm]{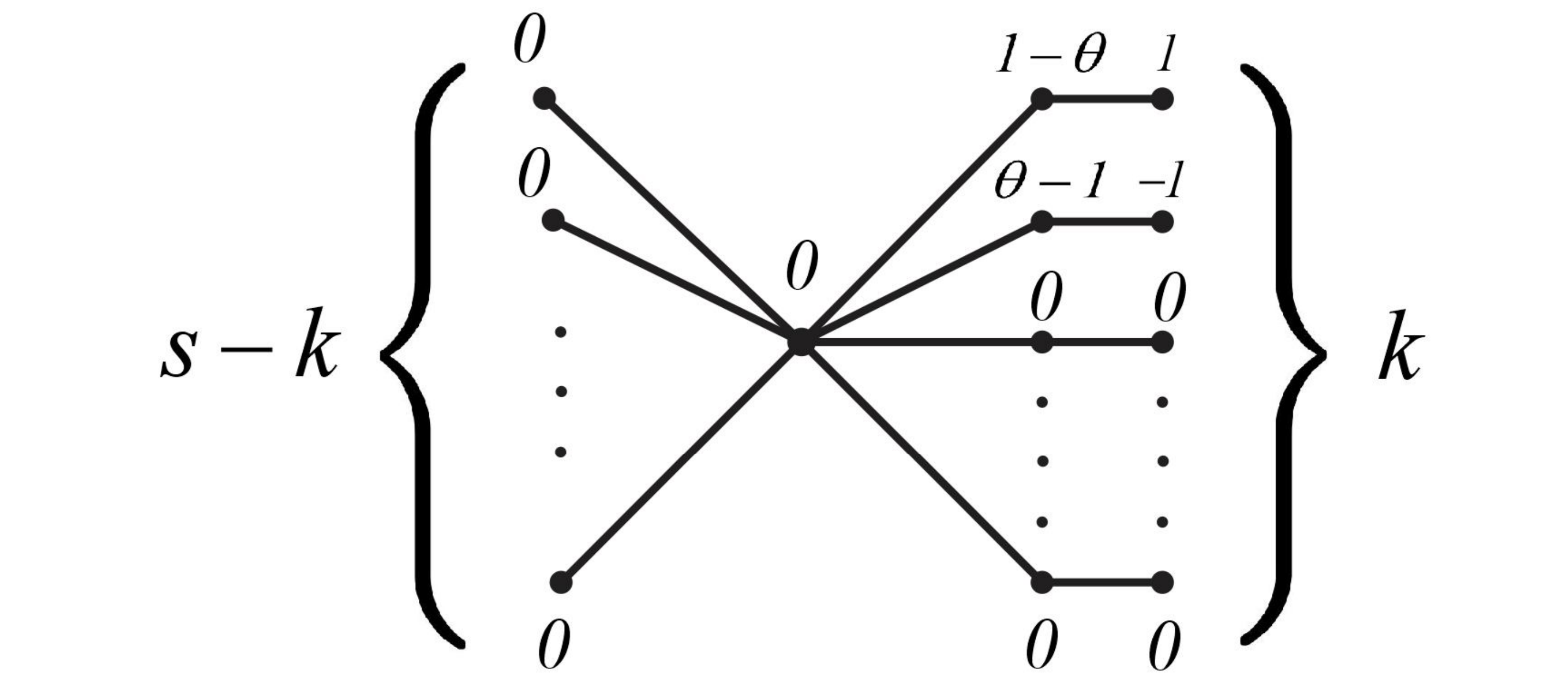}
\caption{The spider $T(s,k)$}\label{12star}
\end{figure}

\begin{prop}\label{spectrum1}
The Laplacian spectrum of $T(k,k)$ is
$$\{0^{[1]},\, \eva^{[k-1]},\, \lambda_1^{[1]},\, \evac^{[k-1]},\, \lambda_2^{[1]}\},$$
where $(\eva, \evac)=(\frac{3-\sqrt{5}}{2},\frac{3+\sqrt{5}}{2})$ and $\lambda_1$, $\lambda_2$ are the roots of  $x^2-(k+3)x+2k+1$.
\end{prop}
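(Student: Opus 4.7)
The plan is to exploit the natural symmetry of $T(k,k)$: the symmetric group $S_k$ acts on the tree by permuting the $k$ legs, and this action decomposes $\mathbb{R}^{n}$ (with $n=2k+1$) into a $3$-dimensional ``symmetric'' subspace $S$ (vectors constant on each of the three orbits: the center, the $k$ middle vertices, the $k$ pendants) and its $(2k-2)$-dimensional orthogonal complement $S^{\perp}$. Because $L(T(k,k))$ commutes with this action, both subspaces are $L$-invariant, and I will analyze them separately.

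\textbf{Symmetric part.} I would write an eigenvector in $S$ as $(c,\,m\mathbf{1}_k,\,p\mathbf{1}_k)$ with $c,m,p\in\mathbb{R}$, and extract the quotient system from $Lx=\mu x$:
\begin{align*}
k(c-m) &= \mu c,\\
(2-\mu)m - c - p &= 0,\\
(1-\mu)p - m &= 0.
\end{align*}
Eliminating $c$ and $p$ (for $\mu\notin\{1,k\}$; the excluded cases are handled directly) leads, after clearing denominators, to
$$\mu\bigl[\mu^{2}-(k+3)\mu+(2k+1)\bigr]=0,$$
which gives $\mu=0$ together with the two roots $\lambda_{1},\lambda_{2}$ of $x^{2}-(k+3)x+(2k+1)$ as the three symmetric eigenvalues.

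\textbf{Antisymmetric part.} Every $y\in S^{\perp}$ has value $0$ on the center, sum $0$ over the middle vertices, and sum $0$ over the pendants (in fact more: within each orbit the entries are orthogonal to $\mathbf{1}$). Writing the Laplacian equations vertex-by-vertex on leg $i$, with middle value $m_{i}$ and pendant value $p_{i}$, gives
$$2m_{i}-p_{i}=\mu m_{i},\qquad p_{i}-m_{i}=\mu p_{i},$$
independently on each leg; the center equation reduces to $\sum_{i}m_{i}=0$, which is automatic on $S^{\perp}$. Composing the two local equations yields $(2-\mu)(1-\mu)=1$, i.e.\ $\mu^{2}-3\mu+1=0$, whose roots are exactly $\theta$ and $\overline{\theta}$. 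For each of these values of $\mu$, $p_{i}$ is determined by $m_{i}$, so the eigenvectors in $S^{\perp}$ are parametrized by the $(k-1)$-dimensional space of $k$-tuples $(m_{1},\dots,m_{k})$ with $\sum m_{i}=0$. This gives $m_{T(k,k)}(\theta)\ge k-1$ and $m_{T(k,k)}(\overline{\theta})\ge k-1$.

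\textbf{Conclusion.} Adding the contributions from $S$ and $S^{\perp}$ accounts for $3+2(k-1)=2k+1=n$ eigenvalues, so all inequalities are equalities and the listed multiset is the full spectrum. The only potential obstacle is making sure the two small special cases $\mu=1$ and $\mu=k$ in the symmetric calculation do not hide an extra eigenvalue, but either $\mu=1$ contradicts Lemma~\ref{van} (since any multiplicity-${>}1$ eigenvalue would have to equal $1$ from the antisymmetric analysis, forcing $\theta\overline{\theta}=1$, which is consistent), or a direct substitution into the quotient system shows these cases reduce to the generic equation. Once this minor check is done, the dimension count closes the proof.
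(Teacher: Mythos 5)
Your proof is correct, but it takes a genuinely different route from the paper's. The paper simply reads off $k-1$ explicit eigenvectors for $\eva$ (from the figure) and likewise for $\evac$, notes that together with $0$ this leaves only two eigenvalues, and then pins those down via the Matrix--Tree theorem ($\lambda_1\lambda_2 = n = 2k+1$) and the trace ($\lambda_1+\lambda_2 = k+3$). You instead decompose $\mathbb{R}^n$ under the $S_k$-symmetry into the $3$-dimensional space of cell-constant vectors and its complement, solve the quotient system on the first and the decoupled per-leg system on the second, and close with a dimension count. Your approach buys more: it \emph{derives} $\eva,\evac$ as the roots of $\mu^2-3\mu+1=0$ rather than verifying guessed eigenvectors, it avoids the Matrix--Tree theorem and the trace identity entirely, and it generalizes mechanically to other highly symmetric trees (indeed it would handle $T(s,k)$ with $s>k$ just as well, where the paper needs the extra $\mathrm{tr}(L^2)$ computation). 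The paper's approach is shorter given the figure and reuses machinery (product of nonzero eigenvalues equals $n$) that recurs throughout the paper. One piece of polish you should make: your elimination in the symmetric part, and the closing paragraph about the excluded cases $\mu\in\{1,k\}$, is more awkward than it needs to be (and the appeal to Lemma~\ref{van} there is not really coherent). The clean statement is that the restriction of $L$ to the $3$-dimensional invariant subspace $S$ is similar to the equitable-quotient matrix
$$B=\begin{pmatrix} k & -k & 0\\ -1 & 2 & -1\\ 0 & -1 & 1\end{pmatrix},$$
whose characteristic polynomial is $x\left[x^2-(k+3)x+(2k+1)\right]$ by direct expansion; this yields exactly the three symmetric eigenvalues with no case distinctions and no division by $k-\mu$ or $1-\mu$. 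With that substitution your argument is complete.
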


\begin{proof}
In Figure \ref{12star}, an eigenvector for eigenvalue $\eva=\frac{3-\sqrt{5}}{2}$ is shown\footnote{Whenever we use $\eva$ and $\evac$ in this paper, we mean the specific values $\frac{3-\sqrt{5}}{2}$ and $\frac{3+\sqrt{5}}{2}$.}. It is clear that there are $k-1$ linearly independent such eigenvectors corresponding to the eigenvalue $\eva$. Similarly, we find $k-1$ linearly independent eigenvectors corresponding to eigenvalue $\evac=\frac{3+\sqrt{5}}{2}$.
Because $\mu_1=0$, there are only two other eigenvalues $\lambda_1 \leq \lambda_2$. Because the product of all non-zero eigenvalues equals $n=2k+1$, it follows that $\lambda_1 \lambda_2 =2k+1$. Noting the trace of $L$ (which equals twice the number of edges) it follows that $\lambda_1 + \lambda_2 =k+3$, and the result follows.
\end{proof}

Note that if we let $\lambda_2 >\lambda_1$, then $\lambda_1>1$ and $\lambda_2>\evac$, so for $k \geq 2$ the algebraic connectivity $\mu_2$ equals $\frac{3-\sqrt{5}}{2}$ and $\mu_{n-1}=\frac{3+\sqrt{5}}{2}$. Proposition \ref{spectrum1} thus shows that the upper bound of Proposition \ref{delta} is sharp.

\begin{prop}\label{spectrum2}
The Laplacian spectrum of $T(s,k)$ with $k<s$ is
$$\{ 0^{[1]},\, \eva^{[k-1]},\, \lambda_1^{[1]},\, 1^{[s-k-1]},\, \lambda_2^{[1]},\, \evac^{[k-1]},\, \lambda_3^{[1]}\},$$
where $(\eva, \evac)=(\frac{3-\sqrt{5}}{2},\frac{3+\sqrt{5}}{2})$ and $\lambda_1$, $\lambda_2$, $\lambda_3$ are the roots of $x^3-(s+4)x^2+(3s+4)x-(s+k+1)$.
\end{prop}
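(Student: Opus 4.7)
The plan is to mirror the proof of Proposition \ref{spectrum1}: construct as many eigenvectors as possible using the symmetry of $T(s,k)$, then recover the remaining eigenvalues via trace and Matrix-Tree identities. The vertices of $T(s,k)$ fall into four automorphism orbits---the centre, the $k$ degree-$2$ vertices on the extended legs, the $k$ tip pendants of those legs, and the $s-k$ unextended pendants at the centre---and I would build three families of explicit eigenvectors. First, for each pair of extended legs, the construction of Proposition \ref{spectrum1} (supported on two legs only, zero on the centre and on the unextended pendants) works verbatim: the eigenvalue equations on a single leg reduce to $(2-\mu)(1-\mu)=1$, whose roots are exactly $\eva$ and $\evac$, giving $k-1$ linearly independent eigenvectors for each of $\eva$ and $\evac$. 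Second, for each pair of unextended pendants $u_j,u_{j'}$, the twin-pendant vector $e_{u_j}-e_{u_{j'}}$ is a Laplacian eigenvector for $\mu=1$ (as in Lemma \ref{kame}), yielding $s-k-1$ further independent eigenvectors.

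Together with $\mu_1=0$, the above accounts for $s+k-2$ eigenvalues, so exactly three remain, call them $\lambda_1,\lambda_2,\lambda_3$. I would determine them via their elementary symmetric functions: $\sum\lambda_i$ from $\mathrm{tr}(L)=2(s+k)$ together with $\eva+\evac=3$; the product $\lambda_1\lambda_2\lambda_3$ from the Matrix-Tree identity (the product of the nonzero Laplacian eigenvalues of $T(s,k)$ equals $n$) together with $\eva\evac=1$; and $\sum_{i<j}\lambda_i\lambda_j$ from $\mathrm{tr}(L^2)=\sum d_i(d_i+1)$ together with $\eva^2+\evac^2=7$ and the already-known sum. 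Assembling these three symmetric functions gives exactly the claimed cubic $x^3-(s+4)x^2+(3s+4)x-(s+k+1)$.

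The main (modest) obstacle is the arithmetic bookkeeping at the last step---in particular, the $\mathrm{tr}(L^2)$ computation and the careful accounting of the four vertex-degree contributions. A tidier alternative would be to use the equitable partition into the four orbits directly: orbit-constant eigenvectors correspond to eigenvalues of a $4\times 4$ quotient matrix, and a single determinant computation yields a characteristic polynomial that factors as $x$ times the claimed cubic, simultaneously giving the cubic and confirming that $0$ is the fourth symmetric eigenvalue.
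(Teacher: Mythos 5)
Your proposal is correct and follows essentially the same route as the paper: explicit eigenvectors give $\eva^{[k-1]}$, $\evac^{[k-1]}$, $1^{[s-k-1]}$, and $0$, and the remaining three eigenvalues are pinned down by $\mathrm{tr}(L)$, the Matrix--Tree product, and $\mathrm{tr}(L^2)$. The equitable-partition alternative you mention at the end is a legitimate shortcut but is not needed.
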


\begin{proof} As before, we have eigenvalues $\eva$ and $\evac$ with multiplicities at least $k-1$, and eigenvalue $0$. In addition, we have eigenvalue $1$ with multiplicity at least $s-k-1$ by Lemma \ref{kame}. Thus, three eigenvalues $\lambda_1, \lambda_2,\lambda_3$ are left. Again, from the product of all non-zero eigenvalues being equal to $n$, it follows that   $\lambda_1 \lambda_2 \lambda_3 = n= s+k+1$. Moreover, $\lambda_1+\lambda_2 +\lambda_3= 2n-2-(s-k-1)-3(k-1)=s+4$. A quadratic equation easily follows from the trace of $L^2$:
$$ \sum_{i=1}^n \mu_i^2= \sum_{i=1}^n d_i^2 + 2m$$
for every graph $G$ of order $n$ with $m$ edges, and degrees $d_i$. For $T(s,k)$, we obtain that
$\lambda_1^2+\lambda_2^2 +\lambda_3^2= s^2+2s+8$, and from this we conclude that $\lambda_1 \lambda_2 + \lambda_1 \lambda_3 +\lambda_2 \lambda_3 = \frac{1}{2}(s+4)^2-\frac{1}{2}(s^2+2s+8) = 3s+4$. From all of this, the spectrum follows.
\end{proof}

Note that for a fixed integer $s$, one can use induction to show that $\nobreak \mu_2(T(s,s-i))\geq \eva$ and $\mu_{n-1}(T(s,s-i))\leq \evac$, for $i=0, \ldots , s-1$, starting from Proposition \ref{spectrum1} and by inductively applying Lemma \ref{kame22}. On the other hand, for fixed $k\geq 2$, it also follows by induction that $\mu_{n-1}(T(k+i,k))\geq \evac$ for all $i$. Consequently, we have the following.

\begin{lem}\label{lemmatsk35} Let $T=T(s,k)$, with $s \geq k \geq 2$. Then $\mu_2(T)= \eva=\frac{3-\sqrt{5}}{2}$ and $\mu_{n-1}(T)= \evac=\frac{3+\sqrt{5}}{2}$.
\end{lem}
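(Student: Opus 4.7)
My plan is to follow the inductive strategy sketched in the remark just before the lemma: two inductions based on pendant-vertex interlacing (Lemma~\ref{kame22}), anchored at the case $T(k,k)$ handled by Proposition~\ref{spectrum1}.

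I will first pin down the base case $s=k\geq 2$. Proposition~\ref{spectrum1} lists the Laplacian spectrum of $T(k,k)$ as $\{0,\eva^{[k-1]},\lambda_1,\evac^{[k-1]},\lambda_2\}$; what remains is to verify the ordering $\eva<\lambda_1<\evac<\lambda_2$. Evaluating the quadratic $p(x)=x^2-(k+3)x+(2k+1)$ gives $p(1)=k-1>0$, while using $\evac^2=3\evac-1$ one computes $p(\evac)=k(2-\evac)<0$. Together with $p(x)\to +\infty$ as $x\to\infty$, this forces one root of $p$ to lie in $(1,\evac)$ and the other above $\evac$, giving $\mu_2(T(k,k))=\eva$ and $\mu_{2k}(T(k,k))=\evac$ as required.

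Next I will run the two inductions, both applications of Lemma~\ref{kame22}. Induction~A fixes $s$ and proceeds by downward induction on $k$: deleting the tip of one extended leg from $T(s,k)$ (a pendant vertex) yields $T(s,k-1)$, and pendant interlacing propagates $\mu_2(T(s,k))\ge \eva$ and ``second-largest eigenvalue $\le \evac$'' from $T(s,k)$ to $T(s,k-1)$. Induction~B fixes $k\ge 2$ and proceeds by upward induction on $s$, starting at $s=k$: deleting a pendant adjacent to the center (a non-extended ray) turns $T(s,k)$ into $T(s-1,k)$, and pendant interlacing now propagates the lower bound ``second-largest eigenvalue $\ge \evac$'' from $T(s-1,k)$ up to $T(s,k)$. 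To finish, Induction~A gives $\mu_2(T(s,k))\ge \eva$, and Propositions~\ref{spectrum1} and~\ref{spectrum2} exhibit $\eva$ as a genuine Laplacian eigenvalue of $T(s,k)$ with multiplicity $k-1\ge 1$, so $\mu_2(T(s,k))\le \eva$ as well; the two inequalities force $\mu_2(T(s,k))=\eva$. For $\mu_{n-1}$, Inductions~A and~B together sandwich the second-largest eigenvalue between $\evac$ from below and $\evac$ from above.

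The only real obstacle is bookkeeping the indices across graphs of different size. For instance, in Induction~A with $\tilde G=T(s,k)$ of order $n$ and $G=T(s,k-1)$ of order $n-1$, the ``second-largest'' eigenvalue of $G$ is $\mu_{n-2}(G)$ (not $\mu_{n-1}(G)$, which is the largest). The right inequality to pull out of interlacing is $\mu_{n-2}(G)\le \mu_{n-1}(\tilde G)$; the apparent alternative $\mu_{n-1}(G)\le \mu_n(\tilde G)$ concerns the largest eigenvalue of $G$ and carries no useful information here. Once this index matching is kept straight, each inductive step is a single-line consequence of Lemma~\ref{kame22}.
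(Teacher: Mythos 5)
Your proof is correct and follows essentially the same route as the paper, which sketches exactly these two pendant-interlacing inductions (downward in $k$ for $\mu_2\geq\eva$ and $\mu_{n-1}\leq\evac$, upward in $s$ for $\mu_{n-1}\geq\evac$) in the remark preceding the lemma, anchored at $T(k,k)$ via Proposition~\ref{spectrum1}. You have merely filled in details the paper leaves implicit, such as the root-location computation for $T(k,k)$ and the observation that $\eva$ being a nonzero eigenvalue forces $\mu_2\leq\eva$.
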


It also follows from the above that $\mu_2(T(s,1))>\eva$ for $s\geq 1$.
Below we will show that the spider graphs $T=T(s,k)$, with $s \geq k \geq 2$ are extremal regarding the eigenvalues $\eva$ and $\evac$.

Observe first however that if $k=s-1$ then $m_T(1)=0$ and $\lambda_2=2$. So in this case, $\lambda_1$ and $\lambda_3$ are the roots of $x^2-(s+2)x+s$ and the spectrum of $T(s,s-1)$ is
$$\{0^{[1]},\, \eva^{[s-2]},\, \lambda_1^{[1]},\, 2^{[1]},\, \evac^{[s-2]},\, \lambda_3^{[1]}\}.$$

\begin{figure}[h]
\centering
\includegraphics[width=60mm]{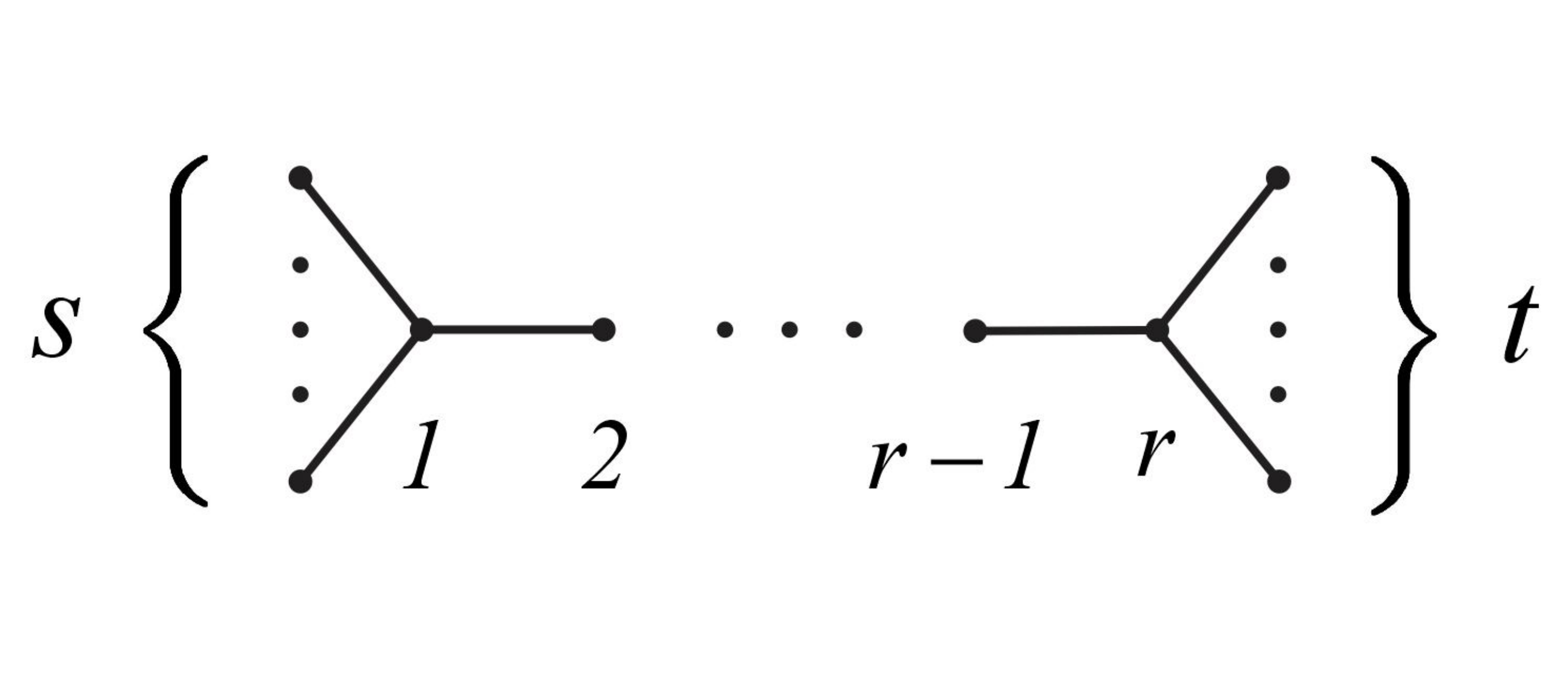}
\caption{$H(s,r,t)$}\label{Htree}
\end{figure}
A second family of trees that we will use is shown in Figure \ref{Htree}. Such a tree $H(s, r, t)$  is obtained by attaching $s$ pendant vertices to one end point of the path on $r$ vertices, and $t$ pendant vertices to its other end point, for some positive integers $s,r,t$. We note that for $r=1$, we obtain a star (a tree with diameter $2$). The Laplacian spectrum of this graph is $\{0^{[1]},\, 1^{[n-2]},\, n^{[1]}\}$. Also, every tree of order $n$ with diameter $3$ is a graph $H(s,2,t)$ for some integers $s, t$ such that $s+t=n-2$. The spectrum of this graph can easily be obtained in a similar way as in Proposition \ref{spectrum2}.

\begin{lem}\label{dou}
\cite[Prop.~1]{Grone}. Let $T=H(s,2,t)$ be of order $n=s+t+2$. Then the characteristic polynomial of $L(T)$ is
$x(x-1)^{n-4}[x^3-(n+2)x^2+(2n+st+1)x-n]$.
\end{lem}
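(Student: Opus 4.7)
The plan is to mimic the strategy used in the proof of Proposition \ref{spectrum2}: identify as many eigenvalues as possible from the structure of $H(s,2,t)$ via Lemma \ref{kame}, then pin down the remaining three using the trace of $L$, the trace of $L^2$, and the Matrix-Tree theorem.

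First I would observe that $H(s,2,t)$ has two quasipendant vertices (the two internal vertices on the path, with degrees $s+1$ and $t+1$) and $s+t = n-2$ pendant vertices. Lemma \ref{kame} immediately gives $m(1) \geq (n-2) - 2 = n-4$. Together with the trivial eigenvalue $0$ (which has multiplicity $1$ because $T$ is connected, by Fiedler), this accounts for $n-3$ of the $n$ eigenvalues. Let $\lambda_1,\lambda_2,\lambda_3$ denote the remaining three. Since the product of all nonzero Laplacian eigenvalues of a tree on $n$ vertices equals $n$ (Matrix-Tree theorem), and the other nonzero eigenvalues contribute a factor $1^{n-4}=1$, we get $\lambda_1\lambda_2\lambda_3 = n$.

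Next I would compute the first two symmetric functions of $\lambda_1,\lambda_2,\lambda_3$. The trace of $L(T)$ is twice the number of edges, so $\sum_i \mu_i = 2(n-1)$; subtracting the contribution of $0$ and the $n-4$ copies of $1$ yields $\lambda_1+\lambda_2+\lambda_3 = n+2$. For the quadratic symmetric function, use
\[
\sum_i \mu_i^2 \;=\; \sum_i d_i^2 + 2(n-1),
\]
with degree sequence containing $s+1$, $t+1$, and $s+t$ ones. A short calculation gives $\sum_i d_i^2 = (s+1)^2+(t+1)^2+(s+t)$, and after substituting $s+t = n-2$ and $s^2+t^2 = (n-2)^2 - 2st$ this simplifies to $\sum_i \mu_i^2 = n^2+n-2-2st$. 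Subtracting the $n-4$ from the eigenvalue $1$ part leaves $\lambda_1^2+\lambda_2^2+\lambda_3^2 = n^2+2-2st$, whence
\[
\lambda_1\lambda_2+\lambda_1\lambda_3+\lambda_2\lambda_3 \;=\; \tfrac{1}{2}\!\left((n+2)^2 - (n^2+2-2st)\right) \;=\; 2n+1+st.
\]

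The three elementary symmetric functions then assemble into the cubic $x^3-(n+2)x^2+(2n+st+1)x-n$, and multiplying by $x(x-1)^{n-4}$ gives the claimed characteristic polynomial. The only mildly delicate step is making sure $m(1)$ is exactly $n-4$ rather than larger; but this is forced because the cubic factor has constant term $-n \neq 0$, so $1$ is a root of the cubic only if $1-(n+2)+(2n+st+1)-n=0$, i.e. $st=0$, which is excluded by $s,t\geq 1$. Hence no extra eigenvalue $1$ can hide in the cubic factor, and the factorization is as stated.
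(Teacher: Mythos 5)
Your proof is correct, and it follows exactly the route the paper indicates: the paper gives no proof of its own (it cites Grone--Merris) but explicitly remarks that the spectrum of $H(s,2,t)$ ``can easily be obtained in a similar way as in Proposition \ref{spectrum2},'' which is precisely your argument via Lemma \ref{kame}, the traces of $L$ and $L^2$, and the Matrix-Tree theorem. Your computations check out (in particular $\sum_i \mu_i^2 = n^2+n-2-2st$ and $e_2 = 2n+st+1$), and the closing observation that $1$ is a root of the cubic only when $st=0$ is a nice, if not strictly necessary, confirmation that the stated factorization is exact.
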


In order to prove some of the results in the next section, we need the following two half-known characterizations of trees with extremal Laplacian eigenvalues $\frac{3 \pm \sqrt{5} }{2}$. The first is a result by Li, Guo, and Shiu \cite{li}. For completeness, we provide a shorter proof of this result.

\begin{figure}[h]
\centering
\includegraphics[width=30mm]{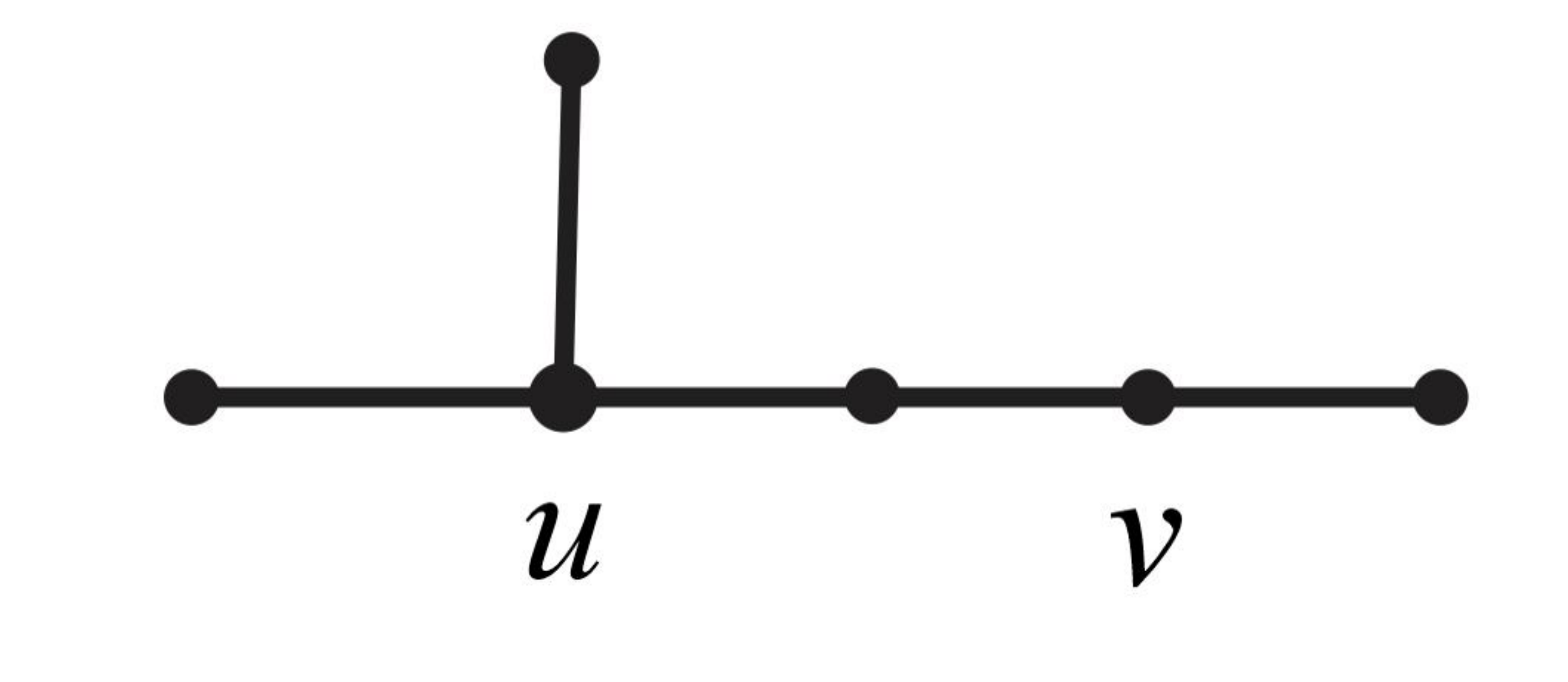}
\caption{The fork $F$}\label{obt}
\end{figure}

\begin{prop}\label{kho}
{\em \cite[Thm.~3.4]{li}.}
Let $G$ be a connected graph of order $n$. Then $\mu_{n-1}(G)=\frac{3+\sqrt{5}}{2}$ if and only if $G\cong T(s,k)$ for some positive integers $s,k$ with $k\geq 2$.
\end{prop}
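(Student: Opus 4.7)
The $(\Leftarrow)$ direction is immediate from Lemma \ref{lemmatsk35}. For the forward direction, suppose $\mu_{n-1}(G) = \evac$ for a connected graph $G$, and apply Lemma \ref{haem} with $m = 2$: this gives $d_2(G) \leq \mu_{n-1}(G) = \evac < 3$, so $d_2(G) \leq 2$. Hence $G$ has at most one vertex of degree $\geq 3$; call it $c$ when it exists. I then reduce to the tree case: if $G$ has no vertex of degree $\geq 3$ then $G$ is a path (treated below) or a cycle $C_n$, and the explicit spectrum of $C_n$ rules out $\mu_{n-1}(C_n) = \evac$. If $G$ has a unique vertex $c$ of degree $d \geq 3$ and contains a cycle, the cycle must pass through $c$ (otherwise it would form a separate component), and I plan to exhibit a principal submatrix of $L(G)$ indexed by $c$ together with its cycle-neighbors and an extra neighbor of $c$, whose second-largest eigenvalue strictly exceeds $\evac$. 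For a triangle through $c$, the $3 \times 3$ submatrix on $\{c, u_1, u_2\}$ has $3$ as an eigenvalue (with eigenvector $(0, 1, -1)^{T}$); for longer cycles a $4$- or $5$-vertex submatrix incorporating another edge at $c$ works. Cauchy interlacing then yields a contradiction.

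So $G$ is a tree with $d_2 \leq 2$, hence a path or a starlike tree with center $c$ of degree $\Delta \geq 3$. In the path case, $\mu_{n-1}(P_n) = 4 \sin^2((n-2)\pi/(2n))$ together with $\evac = 4\sin^2(3\pi/10)$ reduces to $(n-2)/(2n) = 3/10$, forcing $n = 5$ and $G = P_5 = T(2, 2)$. In the starlike case, the key step is to show that every leg has length at most $2$. Suppose some leg has length $\geq 3$, starting $c - v_1 - v_2 - v_3$, and pick any further neighbor $u$ of $c$ (which exists since $\Delta \geq 3$). The principal submatrix $B$ of $L(G)$ on $\{v_3, v_2, v_1, c, u\}$ is $5 \times 5$ tridiagonal with off-diagonals $-1$ and diagonal $(d_G(v_3), 2, 2, d, d_G(u))$. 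In the extremal smallest case, diagonal $(1, 2, 2, 3, 1)$, I will compute the characteristic polynomial $p(x) = x^5 - 9 x^4 + 27 x^3 - 31 x^2 + 12 x - 1$, and using $\evac^2 = 3\evac - 1$ verify $p(\evac) = \evac > 0$ while $p(3) = -1$; a sign analysis of $p$ places the second-largest root in $(\evac, 3)$. Weyl monotonicity handles the remaining diagonal choices, and Cauchy interlacing then gives $\mu_{n-1}(L(G)) \geq \mu_4(B) > \evac$, a contradiction.

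Therefore every leg of $G$ has length in $\{1, 2\}$, so $G = T(s, k)$ with $s = \Delta$ and $k$ the number of length-$2$ legs. The case $k = 0$ gives $G = K_{1, s}$ with $\mu_{n-1} = 1 \neq \evac$; and for $k = 1$, substituting $\evac$ into the cubic $x^3 - (s+4)x^2 + (3s+4)x - (s+2)$ from Proposition \ref{spectrum2} and reducing via $\evac^2 = 3\evac - 1$, $\evac^3 = 8 \evac - 3$ yields $-1 \neq 0$, ruling out $k = 1$. Hence $k \geq 2$, completing the argument. The hardest part will be the non-tree reduction, since the correct principal submatrix depends on the length of the cycle through $c$: a triangle is handled immediately, but longer cycles require a larger submatrix incorporating an additional neighbor of $c$.
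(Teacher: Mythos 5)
Your overall strategy is sound and genuinely different from the paper's. Both arguments get the backward direction from Lemma \ref{lemmatsk35} and both extract $d_2(G)\leq 2$ from Lemma \ref{haem}; after that, the paper argues by forbidden subgraphs: it observes that $\mu_5(P_6)=\mu_5(F)=3$ for the six-vertex fork $F$ and uses the Laplacian edge- and pendant-vertex-deletion interlacing of Lemmas \ref{kame3} and \ref{kame22} to conclude that $G$ contains neither, so every vertex of degree at least $2$ is adjacent to the maximum-degree vertex and $G$ is a spider; this handles trees and non-trees in one stroke. You instead work with principal submatrices of $L(G)$ and Cauchy interlacing, doing an explicit $5\times 5$ determinant to kill legs of length at least $3$. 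The computations you committed to are correct: the tridiagonal matrix with diagonal $(1,2,2,3,1)$ has characteristic polynomial $x^5-9x^4+27x^3-31x^2+12x-1$, whose value at $\evac$ is $\evac>0$ and at $3$ is $-1$, so it has one root in $(\evac,3)$ and one above $3$, hence second-largest eigenvalue exceeding $\evac$; the Weyl monotonicity step, the path and $k\in\{0,1\}$ exclusions at the end are also fine. The price of your route is that the non-tree case, which the paper gets for free, must be handled separately and computationally.

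That is where the one genuine soft spot lies. For a cycle of length at least $4$ through $c$, the $4\times 4$ principal submatrix on $\{c,u_1,u_2,u\}$ suggested by your sketch does \emph{not} work: in the minimal case its diagonal is $(3,2,2,1)$ with $u_1,u_2$ nonadjacent, its spectrum is $\{2\}$ together with the roots of $x^3-6x^2+8x-2$, which lie in $(0,1)$, $(1,2)$, and $(4,5)$, so its second-largest eigenvalue is exactly $2<\evac$ and Cauchy interlacing yields nothing. You genuinely need five vertices: for a $4$-cycle take the whole cycle plus the extra neighbor $u$ (the minimal submatrix is the Laplacian of the $4$-pan, whose second-largest eigenvalue is the middle root of $x^3-8x^2+18x-10$, which lies in $(\evac,3)$), and for longer cycles take $\{u_1',u_1,c,u_2,u\}$ with $u_1'$ the second cycle-vertex beyond $u_1$ (minimal characteristic polynomial $x^5-10x^4+35x^3-52x^2+31x-5$, with value $1$ at $\evac$ and $-2$ at $3$). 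So the plan is repairable, but as written the ``$4$-vertex submatrix'' claim is false and that step must be redone with the larger submatrices.
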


\begin{proof}
One side is clear by Lemma \ref{lemmatsk35}. To show the other side, suppose that $\mu_{n-1}(G)=\frac{3+\sqrt{5}}{2}=\evac$. If $\Delta(G)=2$, then $G$ is a path or a cycle. Since $\mu_{5}(P_6)=3>\evac$, it follows from interlacing (Lemmas \ref{kame3} and \ref{kame22}) that the order of $G$ is less than $6$, and then it can be checked that $G\cong P_5 = T(2,2)$. Next, assume that $u$ is a vertex of $G$ with degree $\Delta(G)\geq 3$. If there exists a vertex $v$ at distance $2$ from $u$ with degree at least $2$, then the graph $G$ can be obtained from the fork $F$ in Figure \ref{obt} by adding some pendant vertices and some edges. But $\mu_5(F) =3>\evac$ and so again by interlacing, we have that $\mu_{n-1}(G)\geq \mu_5(F) > \evac$, which is a contradiction. Thus, every vertex of degree at least $2$ is adjacent to $u$. By Lemma \ref{haem}, we have $d_2(G)\leq \mu_{n-1}$, hence $d_2(G) \leq 2$. So $G\cong T(s,k)$ for some integers $s,k$. By Proposition \ref{spectrum2}, it is necessary to have $k\geq 2$.
\end{proof}

We note that the proof actually shows that if $\mu_{n-1}(G) \leq \frac{3+\sqrt{5}}{2}$, then $G$ is a star or a spider. For more details and the full classification of graphs with $\mu_{n-1}(G) \leq 3$, we refer to Li, Guo, and Shiu \cite{li}. The following similar result is a strengthening of a result by Zhang \cite[Thm.~2.12]{zhang}.

\begin{prop}\label{kho2}
%cf.~{\em \cite[Thm.~2.12]{zhang}.}
Let $T$ be a tree of order $n$. If $\mu_2(T) \geq \frac{3-\sqrt{5}}{2}$, then $T$ is a star, a spider, $H(2,2,2)$, or $H(3,2,2)$, and equality holds if and only if $T\cong T(s,k)$ for some positive integers $s,k$, with $k\geq 2$.
\end{prop}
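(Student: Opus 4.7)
The proof will parallel Proposition~\ref{kho}, but uses Lemma~\ref{kame22} in the direction that deleting a pendant weakly increases $\mu_2$; hence for every induced subtree $F$ of $T$ one has $\mu_2(F)\ge\mu_2(T)$. I would first bound the diameter. A direct computation gives $\mu_2(P_6)=2-\sqrt{3}<\eva$, so if $\diam(T)\ge 5$ then the induced $P_6$ in $T$ forces $\mu_2(T)\le\mu_2(P_6)<\eva$, a contradiction. Hence $\diam(T)\le 4$; and $\diam(T)\le 2$ makes $T$ a star, which is on the list.

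In the diameter-$3$ case we have $T=H(s,2,t)$; by Lemma~\ref{dou} the nontrivial cubic factor of the characteristic polynomial is $p(x)=x^3-(n+2)x^2+(2n+st+1)x-n$ with $n=s+t+2$. Using $\eva^2=3\eva-1$ and hence $\eva^3=8\eva-3$, a direct simplification yields
\[
p(\eva)=(s-1)(t-1)\,\eva-1.
\]
Since $p(0)=-n<0$ and $p(1)=st>0$, the smallest root of $p$ lies in $(0,1)$ and equals $\mu_2(T)$. So $\mu_2(T)\ge\eva$ is equivalent to $(s-1)(t-1)\le 1/\eva=\evac<3$, giving $(s-1)(t-1)\in\{0,1,2\}$: $T$ is a spider (of the form $T(\cdot,1)$), $H(2,2,2)$, or $H(3,2,2)$.

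In the diameter-$4$ case, let $c$ be the central vertex. If $T$ is not a spider then some neighbor $u_1$ of $c$ has two or more pendant neighbors, and the diameter condition forces another neighbor $u_2$ of $c$ to have at least one pendant $p_3$. The six vertices $c,u_1,u_2$, two pendants of $u_1$, and $p_3$ then induce a copy of the fork $F$ of Figure~\ref{obt}. The technical heart of the argument is the verification $\mu_2(F)<\eva$: using the automorphism swapping the two pendants of $u_1$ to strip off the antisymmetric eigenvalue $1$, one factors the characteristic polynomial of $L(F)$ as $x(x-1)(x-3)(x^3-6x^2+8x-2)$. Substituting via $\eva^2=3\eva-1$ gives $\eva^3-6\eva^2+8\eva-2=1-2\eva>0$, while the cubic equals $-2<0$ at $x=0$; so its smallest positive root is strictly less than $\eva$, and by inspection of the factorization this root must be $\mu_2(F)$. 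Thus $\mu_2(T)\le\mu_2(F)<\eva$, a contradiction, so $T$ is a spider.

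For the equality statement one checks each candidate. Stars give $\mu_2=1>\eva$; the diameter-$3$ cases $T(\cdot,1)$, $H(2,2,2)$, and $H(3,2,2)$ give $p(\eva)=-1$, $\eva-1$, and $2\eva-1$, respectively, all strictly negative, hence $\mu_2>\eva$ strictly; while Lemma~\ref{lemmatsk35} provides $\mu_2=\eva$ exactly for the spiders $T(s,k)$ with $k\ge 2$. The main obstacle is the explicit verification that $\mu_2(F)<\eva$, which reduces (via the $2$-fold symmetry peeling off the $1$-eigenspace and a root at $x=3$) to a sign check of a cubic at $0$ and at $\eva$.
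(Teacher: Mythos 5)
Your proof is correct and follows essentially the same route as the paper: bound the diameter by $4$ via $\mu_2(P_6)<\eva$, settle diameter $3$ through the cubic factor of $H(s,2,t)$ from Lemma~\ref{dou}, and rule out non-spiders of diameter $4$ by exhibiting the fork $F$ with $\mu_2(F)<\eva$. The only difference is that you make explicit the computations the paper merely asserts (the identity $p(\eva)=(s-1)(t-1)\eva-1$ and the factorization $x(x-1)(x-3)(x^3-6x^2+8x-2)$ for $F$), and these check out.
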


\begin{proof}
Suppose that $\mu_2(T) \geq \frac{3-\sqrt{5}}{2}=\eva$. By interlacing (Lemma \ref{kame22}), the tree $T$ cannot have $P_6$ as a subgraph since $\mu_2(P_6)<\eva$, so $\diam(T)\leq 4$. If $T$ is not a star, then it must have diameter $3$ or $4$. If $\diam(T)=3$, then $T$ is $H(s,2,t)$ for some positive integers $s,t$. Using Lemma \ref{dou}, we find however that $\mu_2(H(3,2,3)) < \eva $ and $\mu_2(H(4,2,2)) < \eva$, so it follows again by interlacing that in this case $T$ can only be one of the trees $H(2,2,2)$, $H(3,2,2)$, and $T(s,1)$, for some positive integer $s$. Indeed, $\mu_2(H(2,2,2)) > \eva $, $\mu_2(H(3,2,2)) > \eva$, and $\mu_2(T(s,1))> \eva$.

Finally, suppose that $\diam(T)=4$. For the fork $F$ (see Figure \ref{obt}), we have $\mu_2(F)<\eva$. So $T$ cannot have $F$ as a subgraph, and thus $T$ is a spider. The case of equality now follows from Lemma \ref{lemmatsk35} and the fact that a star, $H(2,2,2)$, $H(3,2,2)$, and $T(s,1)$ all have $\mu_2>\eva$.
\end{proof}

\section{Trees with a large multiplicity}\label{secMAIN}

We are now ready for our main results, that is, to bound the multiplicities of non-integer Laplacian eigenvalues of trees, and to characterize the trees with large Laplacian eigenvalue multiplicities.

\begin{thm}\label{km2}
Let $T$ be a tree of order $n\geq 6$ with a Laplacian eigenvalue $\mu$. If $\mu\neq 1$, then $m_T(\mu)\leq \frac{n-3}{2}$, and equality holds if and only if $T\cong T(\frac{n-1}{2},\frac{n-1}{2})$ and $\mu =\frac{3 \pm \sqrt{5}}{2}$. In particular, if equality holds, then $\mu$ is the algebraic connectivity of $T$ or its conjugate.
\end{thm}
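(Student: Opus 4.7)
The plan is to control $m := m_T(\mu)$ via the degree $d$ of the minimal polynomial of $\mu$ over $\mathbb{Q}$, and then use the trace identity together with the Matrix-Tree theorem to pin down the tight cases. Because $\mu \neq 1$ and $m \geq 2$, Lemma \ref{van} forces $d \geq 2$, and all $d$ Galois conjugates of $\mu$ occur in the spectrum with the same multiplicity $m$, so $dm + 1 \leq n$. When $d \geq 3$ this yields $m \leq (n-1)/3 \leq (n-3)/2$ for $n \geq 7$, with equality possible only at $n = 7$, $d = 3$; in that borderline case the spectrum has only four distinct values, so by Lemma \ref{cve} we have $\diam(T) \leq 3$, hence $T = H(s,2,t)$ with $s + t = 5$, but Lemma \ref{dou} then forces $m_T(1) = 3$, which is incompatible with a spectrum consisting of $0$ plus three conjugates each of multiplicity $2$. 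For $n = 6$ the inequality $dm \leq 5$ immediately gives $m \leq 1$.

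For $d = 2$ I would write $\mu'$ for the conjugate, so $\mu \mu' = 1$ and $s := \mu + \mu' \geq 3$ is a positive integer (the case $s = 2$ would force $\mu = 1$). To complete the bound I rule out $2m \in \{n-1,n-2\}$. If $2m = n-1$, the spectrum has only three distinct values, so $\diam(T) \leq 2$ by Lemma \ref{cve}, hence $T$ is a star; but then the nonzero spectrum $\{1^{[n-2]},n\}$ contradicts $\mu\mu' = 1$. If $2m = n-2$, there is one remaining simple nonzero eigenvalue $\nu$ whose minimal polynomial is necessarily linear, so $\nu \in \mathbb{Z}$, and the Matrix-Tree theorem forces $\nu = n$; this gives $\mu_n(T) = n$, so $T$ is again a star and $\mu = 1$, a contradiction. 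Hence $m \leq (n-3)/2$.

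For the equality case, $n$ is odd and there are exactly two further nonzero eigenvalues $\nu_1,\nu_2$; the trace identity and the Matrix-Tree theorem give
\[
\nu_1 + \nu_2 = 2(n-1) - ms, \qquad \nu_1 \nu_2 = n.
\]
Positivity of $\nu_1 + \nu_2$ bounds $s \leq 5$ for $n \geq 7$, and the real-discriminant condition $(\nu_1+\nu_2)^2 \geq 4n$ eliminates $s = 4$ and $s = 5$, leaving $s = 3$ and therefore $\{\mu,\mu'\} = \{\eva,\evac\}$. To identify $T$, I would evaluate $p(x) := x^2 - \tfrac{n+5}{2}x + n$ at $x = \evac$ using $\evac^2 = 3\evac - 1$, obtaining $p(\evac) = (n-1)(1 - \evac/2) < 0$; this shows $\nu_1 < \evac < \nu_2$, and hence $\mu_{n-1}(T) = \evac$. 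Proposition \ref{kho} then pins $T \cong T(s',k')$ with $k' \geq 2$, and Propositions \ref{spectrum1}--\ref{spectrum2} give $m_T(\eva) = k' - 1$; equating this with $(n-3)/2$ forces $s' = k' = (n-1)/2$. The statement about the algebraic connectivity is then immediate from Lemma \ref{lemmatsk35}. The main obstacle is the bookkeeping in the equality case: isolating the contribution of each irreducible factor of the characteristic polynomial and then using the three constraints (positive sum, product equal to $n$, real discriminant) to force $s = 3$.
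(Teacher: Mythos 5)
Your proof is correct, and it departs from the paper's in two substantive places. For the upper bound, the paper also starts from the existence of a conjugate eigenvalue (so $2m+1\le n$) and kills $2m=n-1$ exactly as you do, but it disposes of $2m=n-2$ combinatorially: $\diam(T)\le 3$ by Lemma \ref{cve}, so $p=n-2$ and $q=2$, whence $m_T(1)\ge n-4\ge 2$ by Lemma \ref{kame}, a contradiction. Your alternative --- the single remaining nonzero eigenvalue is a rational algebraic integer, hence equals the product $n$, forcing a star --- is equally valid and rests on a fact the paper itself invokes a few lines later. The larger divergence is in the equality case. The paper does not determine $\mu$ by arithmetic; it deduces $\lambda_1\lambda_2=n$, hence $m_T(1)=0$, hence $p=q$, combines this with $\diam(T)\le 4$ and $n$ odd, and asserts that $T\cong T(\frac{n-1}{2},\frac{n-1}{2})$ ``easily follows'', reading the eigenvalues off Proposition \ref{spectrum1} afterwards. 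You go in the opposite order: first force $\mu+\overline{\mu}=3$ from the trace bound and the discriminant condition $(\nu_1+\nu_2)^2\ge 4n$ --- precisely the technique the paper saves for Lemma \ref{lem:n/2-k} --- then show $\mu_{n-1}(T)=\evac$ by a sign computation and invoke the classification of Proposition \ref{kho}. Your route replaces the paper's rather terse combinatorial identification with a mechanical one, at the price of leaning on Proposition \ref{kho}; the paper's is shorter but leaves the final structural step to the reader. One detail you should make explicit: concluding $m_T(\eva)=k'-1$ from Propositions \ref{spectrum1} and \ref{spectrum2} requires checking that the residual quadratic (resp.\ cubic) factor there does not have $\eva$ as a root; this is immediate, since divisibility by $x^2-3x+1$ would force $k'=0$, but it is a needed line.
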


\begin{proof}
Suppose that $m_T(\mu) > \frac{n-3}{2}$ and $\mu \neq 1$. Then $m_T(\mu)>1$. So  by Lemma \ref{van}, the eigenvalue $\mu$ is not an integer, so it has at least one conjugate eigenvalue $\overline{\mu}$ with $m_T(\overline{\mu})=m_T(\mu)$. So $m_T(\mu)\leq \frac{n-1}{2}$. If $n$ is odd, then $m_T(\mu) =\frac{n-1}{2}$, so $T$ has exactly $3$ distinct eigenvalues. This implies that the diameter of $T$ is at most $2$ (by Lemma \ref{cve}), so $T$ is a star, with spectrum $\{ 0^{[1]},\, 1^{[n-2]},\, n^{[1]}\}$, which is a contradiction.

So $n$ is even and $m_T(\mu)=\frac{n-2}{2}$. Hence $T$ has spectrum
$$ \{ 0^{[1]},\, \mu^{[\frac{n-2}{2}]},\, \overline{\mu}^{[\frac{n-2}{2}]},\, \lambda^{[1]}\}$$
for some eigenvalue $\lambda$. Now, by Lemma \ref{cve}, $T$ has diameter at most $3$, and it is not a star. So $T$ has $p=n-2$ pendant vertices and $q=2$ quasipendant vertices. Thus, $m_T(1)\geq p-q=n-4 \geq 2$, by Lemma \ref{kame}, which is a contradiction, so $m_T(\mu) \leq \frac{n-3}{2}$.

Now, suppose that $m_T(\mu)= \frac{n-3}{2}$. Again, by Lemma \ref{van}, the eigenvalue $\mu$ is not an integer.
If $n=7$, then possibly $\mu$ can have two conjugate eigenvalues. However, this would imply  that $T$ has four distinct eigenvalues, which would give a contradiction in the same way as in the above case of $n$ even. So $n \geq 7$ and $\mu$ has exactly one conjugate eigenvalue $\overline{\mu}$. By Lemma \ref{van}, we have $\mu\overline{\mu}=1$. So the spectrum of $T$ is
$$\{0^{[1]},\, \mu^{[\frac{n-3}{2}]},\, \overline{\mu}^{[\frac{n-3}{2}]},\, \lambda_1^{[1]},\, \lambda_2^{[1]}\},$$
for some eigenvalues $\lambda_1$ and $\lambda_2$. Since the product of all non-zero eigenvalues of a tree equals $n$, we find that $\lambda_1\lambda_2=n$. Because the only tree on $n$ vertices with an eigenvalue $n$ is the star (which easily follows by observing that the complement of such a tree should be disconnected), it follows that $m_T(1)=0$, and hence by Lemma \ref{kame}, we deduce that $p=q$. Moreover, by Lemma \ref{cve} the diameter of $T$ is at most $4$. Because $n$ is odd, it now easily follows that $T$ is isomorphic to $T(\frac{n-1}{2},\frac{n-1}{2})$. As observed in Proposition \ref{spectrum1}, this graph indeed has an eigenvalue (in fact, the two conjugates $\frac{3\pm \sqrt{5}}2)$ with multiplicity $\frac{n-3}{2}$.
\end{proof}

Note that if $T$ is a tree but not a star, then $\diam(T) \geq 3$. Because $\mu_2(P_4)<1$, it then follows by interlacing that $\mu_2(T)<1$, so we have the following.

\begin{cor}\label{m2n2}
If $T$ is a tree of order $n$ but not a star, then $m_T(\mu_2)\leq \frac{n-3}{2}$.
\end{cor}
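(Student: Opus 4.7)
The plan is to reduce Corollary \ref{m2n2} directly to Theorem \ref{km2}. Since Theorem \ref{km2} bounds $m_T(\mu)$ by $\frac{n-3}{2}$ for every Laplacian eigenvalue $\mu\neq 1$ of a tree of order $n\geq 6$, it suffices to rule out the possibility $\mu_2(T)=1$ whenever $T$ is not a star.

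To do this I would in fact establish the sharper statement $\mu_2(T) < 1$. Because $T$ is a tree but not a star, its diameter is at least $3$, so $T$ contains a subtree isomorphic to $P_4$, namely any four consecutive vertices of a longest path. I would then delete, one at a time, the vertices of $T$ that lie outside this fixed $P_4$, always choosing a leaf of the current tree that is not on the $P_4$. Such a leaf exists because the $P_4$ contributes at most two leaves to any tree containing it as a subpath, while every tree on more than one vertex has at least two leaves. Applying Lemma \ref{kame22} at each step gives the interlacing $\mu_i(\tilde G)\leq \mu_i(G)\leq \mu_{i+1}(\tilde G)$; chaining the left inequality with $i=2$ through all $n-4$ deletions produces
$$\mu_2(T)\leq \mu_2(P_4)=2-\sqrt{2}<1.$$
In particular $\mu_2(T)\neq 1$, and Theorem \ref{km2} then yields $m_T(\mu_2)\leq \frac{n-3}{2}$.

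The main wrinkle to address is the hypothesis $n\geq 6$ of Theorem \ref{km2}. I would dispose of the small orders by inspection: for $n\leq 3$ every tree is a star, so the statement is vacuous; for $n=5$ the only non-star trees are $P_5$ and the spider $T(3,1)$, and in both cases $m_T(\mu_2)=1=\frac{n-3}{2}$, as follows from Proposition \ref{spectrum2} and the explicit spectrum of $P_5$. This leaves only $n=4$, $T=P_4$, for which $m_T(\mu_2)=1>\tfrac12$; hence the corollary is best read with the mild tacit assumption $n\geq 5$, which I would flag when writing it up. The bulk of the argument, namely the interlacing-based reduction to Theorem \ref{km2}, is the essential content and is neither delicate nor computationally intensive.
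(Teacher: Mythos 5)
Your argument is essentially identical to the paper's: the authors likewise observe that a non-star tree has diameter at least $3$, hence contains $P_4$ as a subtree, and conclude $\mu_2(T)\leq\mu_2(P_4)<1$ by interlacing (Lemma \ref{kame22}) before invoking Theorem \ref{km2}. Your extra care with small orders is a minor but genuine improvement, since the bound as stated does fail for $P_4$ at $n=4$, a case the paper leaves implicit.
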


The multiplicity of $\mu_2$ in a star $K_{1,n}$ equals $n-1$ and by Corollary \ref{m2n2}, $m_T(\mu_2)\leq \lfloor \frac{n-1}{2} \rfloor -1$ for every other tree $T$. So there is a huge gap between these multiplicities $n-1$ and $ \lfloor \frac{n-1}{2} \rfloor -1$.
Lemma \ref{lemmatsk35} and Proposition \ref{spectrum2} show that there are no other gaps. In fact, for any $i=1,\dots, \lfloor \frac{n-1}{2} \rfloor -1$, the tree $T=T(n-i-2,i+1)$ has $m_T(\mu_2)=i$.

In Theorem \ref{thm:n/2-k}, we will show that for every positive integer $j$, all except finitely many trees $T$ with $m_T(\mu_2)=\lfloor \frac{n}{2} \rfloor -j$ are spiders. But first, we determine the typical values of the Laplacian eigenvalues with large multiplicity.

\begin{lem}\label{lem:n/2-k}
Let $j$ be an integer with $j \geq 2$. If $T$ is a tree of order $n> 10j$ and $m_T(\mu)= \lfloor \frac{n}{2} \rfloor -j$ for a Laplacian eigenvalue $\mu \neq 1$, then $\mu =\frac{3 \pm \sqrt{5}}{2}$.
\end{lem}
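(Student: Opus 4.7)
The approach is to exploit Lemma \ref{van} together with the trace identity $\sum_i \mu_i(T) = 2(n-1)$. Since $m_T(\mu) \geq 2$ and $\mu \neq 1$, Lemma \ref{van} forces $\mu$ to be irrational, with algebraic conjugates that are also Laplacian eigenvalues of $T$, each of the same multiplicity $m = \lfloor n/2 \rfloor - j$, and whose product equals $1$. Counting multiplicities, if $\mu$ has $k$ conjugates (including itself) then $km \leq n - 1$; substituting $m = \lfloor n/2 \rfloor - j$ and using $n > 10j$ gives $k \leq 2$, hence $k = 2$. Let $\bar\mu$ be the unique other conjugate. Then $\mu$ is a root of $x^2 - s x + 1$ with $s := \mu + \bar\mu \in \mathbb{Z}_{\geq 3}$: integrality of $s$ follows since the eigenvalues of $L(T)$ are algebraic integers, and $s \geq 3$ follows from $\mu \neq \bar\mu$ while $\mu \bar\mu = 1$. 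Writing $\lambda_1, \dots, \lambda_t$ ($t = n - 1 - 2m$) for the remaining (strictly positive) Laplacian eigenvalues, the trace identity yields
\[
\sum_{i=1}^{t} \lambda_i \;=\; 2(n - 1) - m s .
\]
For $s \geq 5$, a direct calculation in both parities of $n$ shows the right-hand side is non-positive once $n > 10j$, contradicting the positivity of each $\lambda_i$. So $s \in \{3, 4\}$.

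To rule out $s = 4$: here $\mu = 2 - \sqrt{3}$ and $\bar\mu = 2 + \sqrt{3}$, and the trace gives $\sum \lambda_i \leq 4j$. Since $T$ is not a path (paths have simple eigenvalues, but $m \geq 2$), $\Delta \geq 3$, so by Lemma \ref{haem} one has $\mu_n \geq \Delta + 1 \geq 4 > \bar\mu$; hence $\mu_n$ is one of the $\lambda_i$. The key step is to show $\Delta \geq m + 1$. When $\mu = \mu_2$, this is Proposition \ref{delta} applied directly. For an arbitrary Laplacian eigenvalue of multiplicity $m \geq 2$ in a tree, the Merris characteristic-vertex argument used in the proof of Proposition \ref{delta} still applies, since it only uses that $m \geq 2$ guarantees an eigenvector with a zero coordinate. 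Thus $\Delta \geq m + 1 \geq \lfloor n/2 \rfloor - j + 1$, and $n > 10j$ forces $\Delta \geq 4j + 1$, so $\mu_n \geq \Delta + 1 \geq 4j + 2$, contradicting $\mu_n \leq \sum \lambda_i \leq 4j$. This eliminates $s = 4$.

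The main obstacle is precisely the last step: Proposition \ref{delta} is stated only for the algebraic connectivity, so one should verify that the characteristic-vertex argument really does extend to any Laplacian eigenvalue of a tree of multiplicity $\geq 2$. A safer route, avoiding this extension, is a case-split: if $\mu = \mu_2$, apply Proposition \ref{delta} directly as above; if $\mu_2 < \mu$, then $\mu_2$ is an extra strictly positive contributor to $\sum \lambda_i$, and combining this refined trace bound with the Matrix-Tree product identity $\prod \lambda_i = n$ (which follows from $\mu\bar\mu = 1$) together with AM-GM on the $\lambda_i$'s yields an upper bound on $n$ in terms of $j$ that conflicts with $n > 10j$. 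Either way, $s = 4$ is ruled out, leaving $s = 3$, so $\mu$ is a root of $x^2 - 3x + 1$, that is, $\mu \in \{\tfrac{3 \pm \sqrt{5}}{2}\}$, as required.
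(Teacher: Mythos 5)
Your argument follows the paper's proof almost step for step: exactly two conjugate eigenvalues because $3(\lfloor n/2\rfloor-j)>n-1$ when $n>10j$; an integer trace $t=\mu+\overline\mu\ge 3$ with $\mu\overline\mu=1$ from Lemma \ref{van}; elimination of $t\ge 5$ by comparing $mt$ with $2n-2$ in both parities; and elimination of $t=4$ via $\Delta\ge m+1$, $\mu_n\ge\Delta+1$ (Lemma \ref{haem}), and the fact that the simple eigenvalue $\mu_n$ must fit inside the residual trace $4j-2$ (resp.\ $4j$). Up to the justification of $\Delta\ge m+1$, this is exactly the paper's argument, which at that point simply cites Proposition \ref{delta}.

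That justification is where your proposal has a genuine gap. Your claim that the Merris characteristic-vertex argument ``only uses that $m\ge 2$ guarantees an eigenvector with a zero coordinate,'' and hence extends to an arbitrary Laplacian eigenvalue of a tree, is not correct: the characteristic vertex and \cite[Thm.~2]{merris} are defined through Fiedler vectors, i.e., eigenvectors of $\mu_2$, and the inequality $m_T(\mu)\le\Delta-1$ is false for general eigenvalues $\mu\ne 1$. For instance, take two adjacent vertices each carrying $r$ pendant paths of length two: then $\frac{3-\sqrt{5}}{2}$ has multiplicity at least $2r-2$ while $\Delta=r+1$. Your fallback for the subcase $\mu_2<\mu$ also does not close the argument: with $t=4$ the remaining $2j-1$ (resp.\ $2j$) nonzero eigenvalues have sum roughly $4j$ and product $n$, so AM--GM gives only $n\lesssim 2^{2j}$, which is perfectly compatible with $n>10j$ once $j\ge 4$ (e.g.\ $j=4$, $n=42$ survives every inequality you list, even after discounting a factor $\mu_2<2-\sqrt{3}$ and a factor $\mu_n\ge 4$). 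So the case $t=4$ with $\mu\ne\mu_2$ is not actually ruled out by your proposal; trace and product inequalities alone cannot do it, and some structural input (for example, showing that in this situation $\mu$ must be the algebraic connectivity, or a degree--multiplicity bound valid for the eigenvalues in question) is still needed.
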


\begin{proof}
Suppose that $n\geq 10j+1$ and $m_T(\mu)=\lfloor \frac{n}{2} \rfloor -j$. Because $\mu$ cannot be integer, there exists another (conjugate) eigenvalue $\overline{\mu}$ with $m_T(\overline{\mu})=\lfloor \frac{n}{2}\rfloor -j$. Since $n\geq 10j+1$ there can be no other (conjugate) eigenvalue with such a large multiplicity. Thus, $\mu$ and $\overline{\mu}$ are the roots of a quadratic factor of the Laplacian polynomial, so $\mu + \overline{\mu}$ equals a positive integer $t$, say. Because $\mu\overline{\mu}=1$ by Lemma \ref{van}, it follows that $t \geq 3$. Since $\mu_1+\cdots +\mu_n =2n-2$, we have that
$(\lfloor \frac{n}{2} \rfloor -j)t < 2n-2.$

First suppose  that $t\geq 5$. If $n$ is even, then it follows that $n<\frac{2jt-4}{t-4} \leq 10j-4$, which is a contradiction. Similarly, if $n$ is odd, then $n<\frac{(2j+1)t-4}{t-4} \leq 10j+1$; again a contradiction.
Next, suppose that $t=4$. Note that by Proposition \ref{delta} and Lemma \ref{haem}, we have $\Delta \geq \lfloor \frac{n}{2} \rfloor -j +1$ and $\mu_n \geq \Delta +1$, and hence  $\mu_n \geq \lfloor \frac{n}{2}\rfloor -j+2$. Note also that $\mu_n$ is a simple eigenvalue by \cite[Prop.~2.2]{kamel}, so it is not equal to $\mu$ or $\overline{\mu}$. If $n$ is even, then
$$ \sum_{\mu_i \notin \{ \mu, \overline{\mu} \}} \mu_i = 4j-2 >  \frac{n}{2} -j+2$$
and so $n<10j-8$, which is a contradiction.
Similarly, if $n$ is odd, then
$$ \sum_{\mu_i \notin \{ \mu, \overline{\mu} \}} \mu_i = 4j > \frac{n-1}{2} -j+2$$
and so $n<10j-3$; again a contradiction.

Thus, $t= 3$ and hence $\mu ,\overline{\mu} = \frac{3 \pm \sqrt{5}}{2}$.
\end{proof}

\begin{thm}\label{thm:n/2-k}
Let $j$ be an integer with $j\geq 2$ and let $T$ be a tree of order $n> 10j$. Then $m_T(\mu_2)= \lfloor \frac{n}{2} \rfloor -j$ if and only if $T \cong T(\lceil \frac{n}{2} \rceil +j-2, \lfloor \frac{n}{2} \rfloor -j+1)$.
\end{thm}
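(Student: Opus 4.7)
The plan is to prove both directions using the machinery already built up, with Lemma \ref{lem:n/2-k} and Proposition \ref{kho2} doing the heavy lifting.

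For the "if" direction, set $s = \lceil n/2 \rceil + j - 2$ and $k = \lfloor n/2 \rfloor - j + 1$. First I would verify $s + k + 1 = n$ and check that the parameters are legal, namely $k \geq 2$ and $s \geq k$; both follow from $j \geq 2$ and $n > 10j$ (the inequality $s-k = \lceil n/2\rceil - \lfloor n/2\rfloor + 2j-3 \geq 0$ holds for $j \geq 2$). Then Proposition \ref{spectrum1} (if $s=k$) or Proposition \ref{spectrum2} (if $s>k$) gives $m_T(\eva) = k-1 = \lfloor n/2 \rfloor - j$, and Lemma \ref{lemmatsk35} identifies $\mu_2(T) = \eva$, so the multiplicity count is correct.

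For the "only if" direction, let $T$ be a tree of order $n > 10j$ with $m_T(\mu_2) = \lfloor n/2 \rfloor - j$. I would first note that this multiplicity is $\geq 4j \geq 8$, so in particular $\geq 2$. Then I rule out that $T$ is a star: a star has $m_T(\mu_2) = m_T(1) = n-2$, which would force $n = 2\lfloor n/2\rfloor - 2j + 4$, impossible for $n > 10j$ with $j \geq 2$. So $T$ is not a star, hence $\diam(T) \geq 3$; applying Lemma \ref{kame22} repeatedly to strip $T$ down to $P_4$ yields $\mu_2(T) \leq \mu_2(P_4) < 1$, so in particular $\mu_2 \neq 1$.

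Now Lemma \ref{lem:n/2-k} applies and forces $\mu_2 \in \{\eva, \evac\}$; since $\evac = (3+\sqrt{5})/2 > 1 > \mu_2$, we must have $\mu_2 = \eva$. At this point Proposition \ref{kho2}, in its equality clause, yields $T \cong T(s,k)$ for some $s \geq k \geq 2$. Finally, I read off the multiplicity of $\eva$ from Propositions \ref{spectrum1} and \ref{spectrum2} as $k-1$, and solve $k-1 = \lfloor n/2 \rfloor - j$ together with $s+k+1 = n$, which gives exactly $k = \lfloor n/2 \rfloor - j + 1$ and $s = \lceil n/2 \rceil + j - 2$.

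There is no real obstacle here: all the work was invested in Lemma \ref{lem:n/2-k} (pinning down the exact value of $\mu$) and Proposition \ref{kho2} (identifying the tree as a spider), and the present theorem is essentially the bookkeeping step that combines them. The only small care needed is the elimination of the star and the verification that $\mu_2 < 1$ so that Lemma \ref{lem:n/2-k} is applicable; both are immediate from interlacing.
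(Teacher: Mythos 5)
Your proposal is correct and follows essentially the same route as the paper: rule out the star so that $\mu_2\neq 1$, invoke Lemma \ref{lem:n/2-k} to pin down $\mu_2=\frac{3-\sqrt 5}{2}$, apply Proposition \ref{kho2} to conclude $T\cong T(s,k)$, and read off $s,k$ from the multiplicity via Propositions \ref{spectrum1} and \ref{spectrum2}. You merely spell out details the paper leaves implicit (the parameter checks in the ``if'' direction, the interlacing argument that $\mu_2<1$), and the small arithmetic slip in your star-elimination equation is harmless since the conclusion is obviously right.
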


\begin{proof}
If $m_T(\mu_2)= \lfloor \frac{n}{2} \rfloor -j$, then $T$ is not a star, so $\mu_2 \neq 1$, and hence by Lemma \ref{lem:n/2-k}, we have $\mu_2= \frac{3-\sqrt{5}}{2}$. Now, by Proposition \ref{kho2}, it is clear that $T\cong T(s,k)$ for some suitable $s$ and $k$. These integers are now determined by the multiplicity of $\mu_2$ (see Propositions \ref{spectrum1} and \ref{spectrum2}), which finishes the proof.
\end{proof}

Observe the contrast between Theorems \ref{km2} and \ref{thm:n/2-k} in the sense that in the latter we restrict to eigenvalue $\mu_2$. Indeed, the following examples show that this restriction is necessary, at least for $j \geq 3$.

\begin{figure}[h]
\centering
\includegraphics[width=70mm]{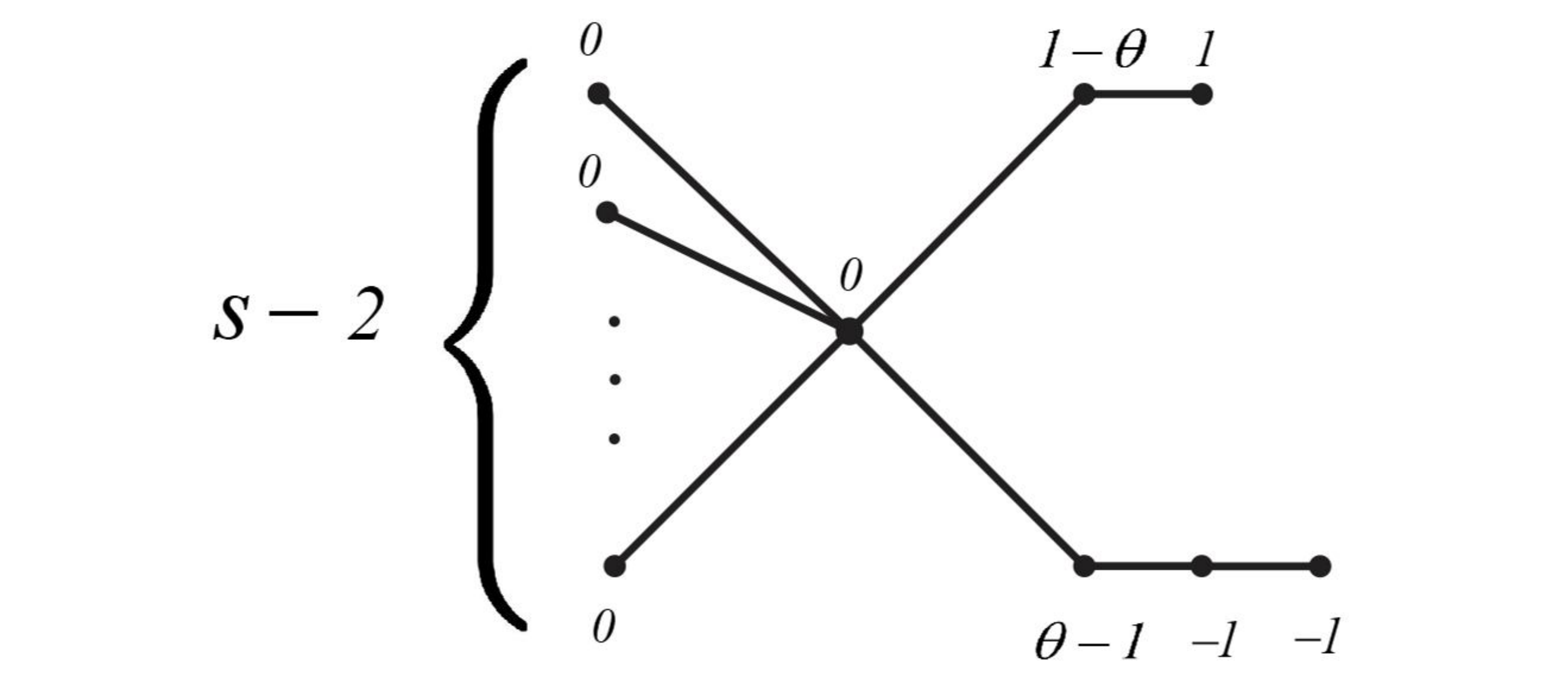}
\caption{The tree $T^{*}(s,2)$}\label{teta}
\end{figure}

%\begin{exam}\rm
Let $s \geq k \geq 2$ and let $T^{*}=T^{*}(s,k)$ be the tree obtained from $T(s,k)$ by adding one pendant vertex to one of the $k$ legs. See Figure \ref{teta} for the case $k=2$. It is clear from Proposition \ref{kho2} that $\mu_2(T^{*})< \eva$, and it follows from interlacing (Lemma \ref{kame22}) that $m_{T^{*}}(\eva)$ equals $k-2$ or $k-1$. We claim that this multiplicity equals $k-2$. In order to show this, we first consider the case $k=2$ and show that $\eva$ is not an eigenvalue of $T^{*}(s,2)$ for any $s \geq 2$.

Indeed, suppose that $\eva$ is an eigenvalue of $T^{*}(s,2)$ with eigenvector $x$. By normalizing the entry $x_1=1$ of the top right vertex in Figure \ref{teta}, and applying the equation $Lx=\eva x$, we recursively find the entries of $x$ as given in the figure (we omit the technical details; note also that it easily follows that $x_1$ cannot be zero). But then finally for the bottom two right vertices (where we obtained $x_{n-1}=x_n=-1$) we should have the equation $x_n-x_{n-1}=\eva x_{n}$, which gives a contradiction.

Next, consider the general case $T^{*}(s,k)$, and suppose that it has eigenvalue $\eva$ with multiplicity $k-1$, for $k\geq 3$. By interlacing it then follows that $T^{*}(s,k-1)$ has eigenvalue $\eva$ with multiplicity at least $k-2$, and by repeating this, we find that $T^{*}(s,2)$ has eigenvalue $\eva$ with multiplicity at least $1$, which is a contradiction, and hence confirms our claim.

For even $n$, we can now take $s=\frac n2+j-4$ for $j \geq 3$, to obtain $m_{T^{*}}(\eva)=\frac n2-j$. For odd $n$, we take  $s=\frac {n-1}2+j-3$ for $j \geq 3$, to obtain $m_{T^{*}}(\eva)=\frac {n-1}2-j$. Thus, for every $j \geq 3$ there are infinitely many trees with such multiplicities.
%\end{exam}

In the given examples, we cannot take $j=2$. Indeed, for this case we can prove something stronger than in Theorem \ref{thm:n/2-k}, and obtain a result that is similar to Theorem \ref{km2}. Here we will show that there are unique trees of order $n$ with multiplicities $\frac{n}{2}-2$ and $\frac{n-1}{2}-2$ for an eigenvalue $\mu\neq 1$.

\begin{thm}\label{n/2-2}
Let $T$ be a tree of order $n\geq 8$ and $\mu \neq 1$ be a Laplacian eigenvalue of $T$. Then $m_T(\mu)= \frac{n}{2} -2$ if and only if $T\cong T(\frac{n}{2}, \frac{n}{2} -1)$ and $\mu=\frac{3\pm \sqrt{5}}{2}$.
In particular, in this case $\mu$ is the algebraic connectivity of $T$ or its conjugate.
\end{thm}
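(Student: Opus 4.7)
The plan is to follow the template of Theorem \ref{km2}, pushed one step further from the extreme. Since $m_T(\mu) = \tfrac{n}{2} - 2 \ge 2$ must be a positive integer, $n$ is even, and Lemma \ref{van} forces $\mu$ to be irrational, with an algebraic conjugate $\overline{\mu}$ of the same multiplicity satisfying $\mu\overline{\mu} = 1$.

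I would first rule out more than one conjugate. If $\mu$ had $c \ge 3$ Galois conjugates, spectrum counting gives $c(\tfrac{n}{2}-2) + 1 \le n$, hence $n \in \{8, 10\}$. Both borderline cases are killed by the Matrix-Tree identity $\prod_{\mu_i \ne 0}\mu_i = n$: for $n = 10$ the spectrum is exhausted by $0$ and three triples of conjugates, and the product collapses to $1 \ne 10$; for $n = 8$ the sole remaining eigenvalue must equal $n$, forcing $T$ to be a star, whose spectrum $\{0, 1^{[n-2]}, n\}$ is incompatible. Hence the spectrum reads
\[
\{0^{[1]},\; \mu^{[n/2-2]},\; \overline{\mu}^{[n/2-2]},\; \lambda_1^{[1]},\; \lambda_2^{[1]},\; \lambda_3^{[1]}\},
\]
with $\lambda_1 \lambda_2 \lambda_3 = n$.

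Next I pin down $\mu$. Writing $s := \mu + \overline{\mu}$, a positive integer $\ge 3$ (from $\mu \ne 1$ and $\mu\overline{\mu} = 1$), the trace identity combined with AM-GM on $\lambda_1, \lambda_2, \lambda_3$ yields
\[
(\tfrac{n}{2} - 2)\,s \;\le\; 2(n-1) - 3\sqrt[3]{n}.
\]
For $n \ge 10$ this pins $s = 3$ immediately. For $n = 8$ only $s = 4$ remains as an alternative, and AM-GM equality then forces $\lambda_1 = \lambda_2 = \lambda_3 = 2$, making $2$ an integer eigenvalue of multiplicity $3$ and contradicting Lemma \ref{van}. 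So $s = 3$ and $\{\mu, \overline{\mu}\} = \{\eva, \evac\}$.

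Finally I identify $T$. Since $T$ is not a star, $\mu_n < n$, so $\lambda_3 < n$ and $\lambda_1 \lambda_2 > 1$. Assuming first $\mu_{n-1}(T) \le \evac$ --- equivalently, $\lambda_2 \le \evac$ --- we get $\lambda_1 > 1/\evac = \eva$, so Proposition \ref{kho} forces $T$ to be a spider $T(s', k')$ with $k' \ge 2$; Proposition \ref{spectrum1} excludes $T(k,k)$ on parity, and matching $m_T(\eva) = k' - 1 = \tfrac{n}{2} - 2$ in the $k' = s' - 1$ case following Proposition \ref{spectrum2} uniquely pins $T \cong T(\tfrac{n}{2}, \tfrac{n}{2} - 1)$; the ``in particular'' statement then follows from Lemma \ref{lemmatsk35}. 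The main obstacle is the complementary case $\mu_{n-1}(T) > \evac$; to close it, I would use Lemma \ref{haem} applied to the forced identity $\mu_{n-2} = \evac$ (yielding $d_3 \le 3$), combine with $p \ge n/2 - 1$ from Lemma \ref{Tm2p} and $\diam(T) \le 5$ from Lemma \ref{cve}, and exploit the arithmetic observation that $\evac^3 > n$ for $n \le 17$ excludes having all three $\lambda_i > \evac$, to derive a structural contradiction.
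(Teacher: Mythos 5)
Your route is genuinely different from the paper's (which proceeds by induction on even $n$, peeling off a star at an extremal quasipendant vertex, invoking Theorem \ref{km2} or the induction hypothesis on the remaining tree $T'$, and verifying the base case $n=8$ by computer), and the first part of your argument is correct and attractive: ruling out three or more conjugates via the Matrix--Tree identity, and then pinning down $\mu+\overline{\mu}=3$ by combining the trace identity with AM--GM on $\lambda_1,\lambda_2,\lambda_3$ works for all even $n\ge 8$ and avoids both the computer check and the restriction $n>20$ that Lemma \ref{lem:n/2-k} would impose. Your Case 1 ($\mu_{n-1}(T)=\evac$, hence $T$ a spider by Proposition \ref{kho}) is also sound.

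The genuine gap is the complementary case $\mu_{n-1}(T)>\evac$, which you yourself flag as ``the main obstacle'' but only meet with a list of tools and the assertion that a ``structural contradiction'' will follow. It does not follow from those tools. Concretely, in the problematic sub-case $\lambda_1<\eva<\evac<\lambda_2\le\lambda_3$, the constraints $\lambda_1\lambda_2\lambda_3=n$, $\lambda_1+\lambda_2+\lambda_3=\tfrac n2+4$, $d_3\le 3$, $p\ge \tfrac n2-1$, $\diam(T)\le 5$ and $\mu_n\ge\Delta+1$ are all simultaneously consistent for large $n$ (e.g.\ with $\lambda_3\approx \tfrac n2$, $\lambda_2\approx 5$, $\lambda_1\approx 0.38$): an AM--GM estimate on $\lambda_2\lambda_3$ yields a contradiction only for $n\lesssim 22$, and your observation that $\evac^3>n$ excludes ``all three $\lambda_i>\evac$'' only for $n\le 17$ and in any case does not touch the sub-case where $\lambda_1$ is small. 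So a real structural argument is still needed here --- this is exactly the difficulty the paper's induction is designed to bypass, since its inductive claim carries the statement $\mu_2(T)=\eva$ (equivalently $\lambda_1>\eva$) along, after which Proposition \ref{kho2} finishes. As written, your proof establishes the value of $\mu$ but not the uniqueness of $T$.
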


\begin{proof} One side of the equivalence is clear from Proposition \ref{spectrum2}. To show the other side, we will first show by induction on $n$ (with $n$ even) the claim that if $m_T(\mu)= \frac{n}{2} -2$, then $\mu=\frac{3\pm \sqrt{5}}{2}$ and that  $\mu_2(T)=\eva=\frac{3- \sqrt{5}}{2}$. We checked that this is true for $n=8$ by enumerating all trees of this order with the Sage computer package.
For $n>8$, consider a tree $T$ with $m_T(\mu)= \frac{n}{2} -2$. Consider one of its extremal quasipendant vertices $v$ (in the sense that $v$ becomes pendant in the tree that is obtained from $T$ by removing all its pendant vertices). Now, remove the edge that connects $v$ to its (unique) non-pendant neighbor. The remaining graph is a disjoint union of a star (with center $v$) and a tree $T'$ on $t$ vertices, say. By interlacing (Lemma \ref{kame3}), it follows that $m_{T'}(\mu) \geq \frac n2-3$. But $m_{T'}(\mu) \leq \frac {t-3}2$ by Theorem \ref{km2}, so it follows that $t \geq n-3$. Because $n-t \geq 2$, we have two cases. If $t=n-3$, then $m_{T'}(\mu) = \frac {t-3}2$, and it follows from Theorem \ref{km2} that indeed $\mu=\frac{3\pm \sqrt{5}}{2}$ and $\mu_2(T')=\eva$, which implies by interlacing that $\mu_2(T)=\eva$. If $t=n-2$, then $m_{T'}(\mu) = \frac {t}2-2$, so by induction $\mu=\frac{3\pm \sqrt{5}}{2}$ and $\mu_2(T')=\eva$, but the latter (and interlacing) again implies that $\mu_2(T)=\eva$, which finishes the proof of our claim.

Just like in the proof of Theorem \ref{thm:n/2-k}, we can now apply Proposition \ref{kho2} to finish the proof.
\end{proof}

We omit the proof of the following similar result, as the proof is very similar, although we also have to use Theorem \ref{n/2-2} now.

\begin{thm}\label{n-1/2-2}
Let $T$ be a tree of order $n\geq 9$ and $\mu \neq 1$ be a Laplacian eigenvalue of $T$. Then $m_T(\mu)= \frac{n-1}{2} -2$ if and only if $T\cong T(\frac{n-1}{2}+1, \frac{n-1}{2} -1)$ and $\mu=\frac{3\pm \sqrt{5}}{2}$.
In particular, in this case $\mu$ is the algebraic connectivity of $T$ or its conjugate.
\end{thm}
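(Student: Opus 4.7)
The plan closely mirrors the proof of Theorem \ref{n/2-2}, with one additional case arising because now $n$ is odd. First, $n$ must indeed be odd for $\frac{n-1}{2}-2$ to be an integer. The forward direction is immediate from Proposition \ref{spectrum2} applied to $T=T(\frac{n+1}{2},\frac{n-3}{2})$ (i.e., $s=\frac{n+1}{2}$ and $k=\frac{n-3}{2}$), which yields $m(\eva)=m(\evac)=k-1=\frac{n-5}{2}$.

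For the converse I would prove by induction on odd $n\geq 9$ the intermediate claim: whenever $m_T(\mu)=\frac{n-5}{2}$ for some $\mu\neq 1$, one has $\mu=\frac{3\pm\sqrt 5}{2}$ and $\mu_2(T)=\eva$. The base case $n=9$ would be verified directly, for instance by a Sage enumeration as in the proof of Theorem \ref{n/2-2}. In the inductive step ($n\geq 11$) I would take an extremal quasipendant vertex $v$ of $T$ with $s\geq 1$ pendant neighbors and let $T'$ be the tree of order $t=n-s-1$ obtained after detaching the $K_{1,s}$ hanging at $v$. Lemma \ref{kame3}, combined with $m_{K_{1,s}}(\mu)=0$, gives $m_{T'}(\mu)\geq m_T(\mu)-1=\frac{n-7}{2}$, while Theorem \ref{km2} forces $m_{T'}(\mu)\leq\frac{t-3}{2}$. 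Together with $t\leq n-2$, this pins down $t\in\{n-4,n-3,n-2\}$.

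Each value of $t$ is handled by one of the earlier theorems. For $t=n-4$ the bound of Theorem \ref{km2} is saturated, so $T'\cong T(\frac{n-5}{2},\frac{n-5}{2})$ and $\mu=\frac{3\pm\sqrt 5}{2}$. For $t=n-3$ (the new, even case that did not appear in Theorem \ref{n/2-2}) Theorem \ref{n/2-2} applies (since $t\geq 8$) and forces $T'\cong T(\frac{n-3}{2},\frac{n-5}{2})$ and $\mu=\frac{3\pm\sqrt 5}{2}$. For $t=n-2$, the multiplicity $m_{T'}(\mu)$ is either $\frac{t-3}{2}$ (yielding $T'\cong T(\frac{n-3}{2},\frac{n-3}{2})$ via Theorem \ref{km2}) or $\frac{t-1}{2}-2$ (yielding $T'\cong T(\frac{n-1}{2},\frac{n-5}{2})$ via the inductive hypothesis), and in either sub-case $\mu=\frac{3\pm\sqrt 5}{2}$. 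In every case, $T'$ is a spider $T(s',k')$ with $k'\geq 2$, so $\mu_2(T')=\eva$ by Lemma \ref{lemmatsk35}.

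Applying Lemma \ref{kame3} once more and counting the eigenvalues of $T'\cup K_{1,s}$ that are at most $\eva$ yields the two-sided bound $1+m_{T'}(\eva)\leq m_T(\leq\eva)\leq 2+m_{T'}(\eva)$; combined with the prescribed $m_T(\eva)=\frac{n-5}{2}$, this pins $\mu_2(T)=\eva$. Then Proposition \ref{kho2} restricts $T$ to a star or a spider (the small exceptions $H(2,2,2)$ and $H(3,2,2)$ are ruled out by $n\geq 9$), and Propositions \ref{spectrum1}--\ref{spectrum2}, combined with the multiplicity $\frac{n-5}{2}$, force $T\cong T(\frac{n+1}{2},\frac{n-3}{2})$. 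The main obstacle I anticipate is the sub-case $t=n-2$ in which Theorem \ref{km2} is saturated: here the sandwich bound only directly gives $m_T(\leq\eva)\leq\frac{n-1}{2}$ and so allows in principle one positive eigenvalue strictly below $\eva$; a short extra argument, in the spirit of the direct eigenvector computation used to analyse $T^{*}(s,2)$ after Theorem \ref{thm:n/2-k}, should rule this possibility out.
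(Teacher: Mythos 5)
Your proposal follows exactly the route the paper intends (the paper omits this proof, saying only that it is ``very similar'' to that of Theorem \ref{n/2-2} except that Theorem \ref{n/2-2} itself must now be invoked): induction on odd $n$, detaching the star at an extremal quasipendant vertex, and the three cases $t\in\{n-4,n-3,n-2\}$, with the new even case $t=n-3$ handled by Theorem \ref{n/2-2}, are all correctly identified and correctly resolved. The obstacle you flag is the only genuine one: in the sub-case $t=n-2$ with $m_{T'}(\mu)=\frac{t-3}{2}$, so that $T'\cong T(\frac{n-3}{2},\frac{n-3}{2})$ and the multiplicity does not drop, the interlacing sandwich leaves room for one positive eigenvalue below $\eva$; your suggested fix does close it, since the direct eigenvector computation (in the spirit of the $T^{*}(s,2)$ analysis) shows that attaching a pendant path of length two to $T(k,k)$ at any vertex other than the centre drops $m(\eva)$ from $k-1$ to $k-2$, while attaching at the centre yields $T(k+1,k+1)$ with multiplicity $k$, so no tree in that sub-case has $m_T(\eva)=\frac{n-5}{2}$ and the sub-case is vacuous. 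With that verification written out, your argument is complete and coincides with the paper's intended proof.
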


We note that one could try to extend this result further, and indeed, the induction steps work over and over again, so induction would give a theorem about multiplicity $\lfloor \frac{n}{2} \rfloor -j$ for each $j \geq 2$, if only the bases of the induction steps would be true. Of course, this is where things go wrong. For example, we cannot prove a similar result for multiplicity $\frac{n}{2} -3$ and $n \geq 10$ because we have counterexamples such as $T^{*}(4,4)$ on $10$ vertices. Indeed, there are five such counterexamples, and we depict them in Figure \ref{5examples2}. Besides $T^{*}(4,4)$ (Fig.~\ref{5examples2}a), there are (exactly) two other trees of order $10$ that have eigenvalue $\eva$ with multiplicity $2$ and $\mu_2(T)<\eva$ (Fig.~\ref{5examples2}b and c). Moreover, there is one tree of order $10$ with eigenvalues $2 \pm \sqrt{3}$ with multiplicity $2$ (Fig.~\ref{5examples2}d), and one tree of order $10$ that has the roots of $x^3-5x^2+6x-1$ as eigenvalues with multiplicity $2$ (Fig.~\ref{5examples2}e). The variety of these examples shows that further classification of trees with an eigenvalue other than $1$ with multiplicity $\lfloor \frac{n}{2} \rfloor -j$ for $j \geq 3$ (on top of Lemma \ref{lem:n/2-k} and Theorem \ref{thm:n/2-k}) seems unfeasible.

\begin{figure}[h]
\centering
\includegraphics[width=110mm]{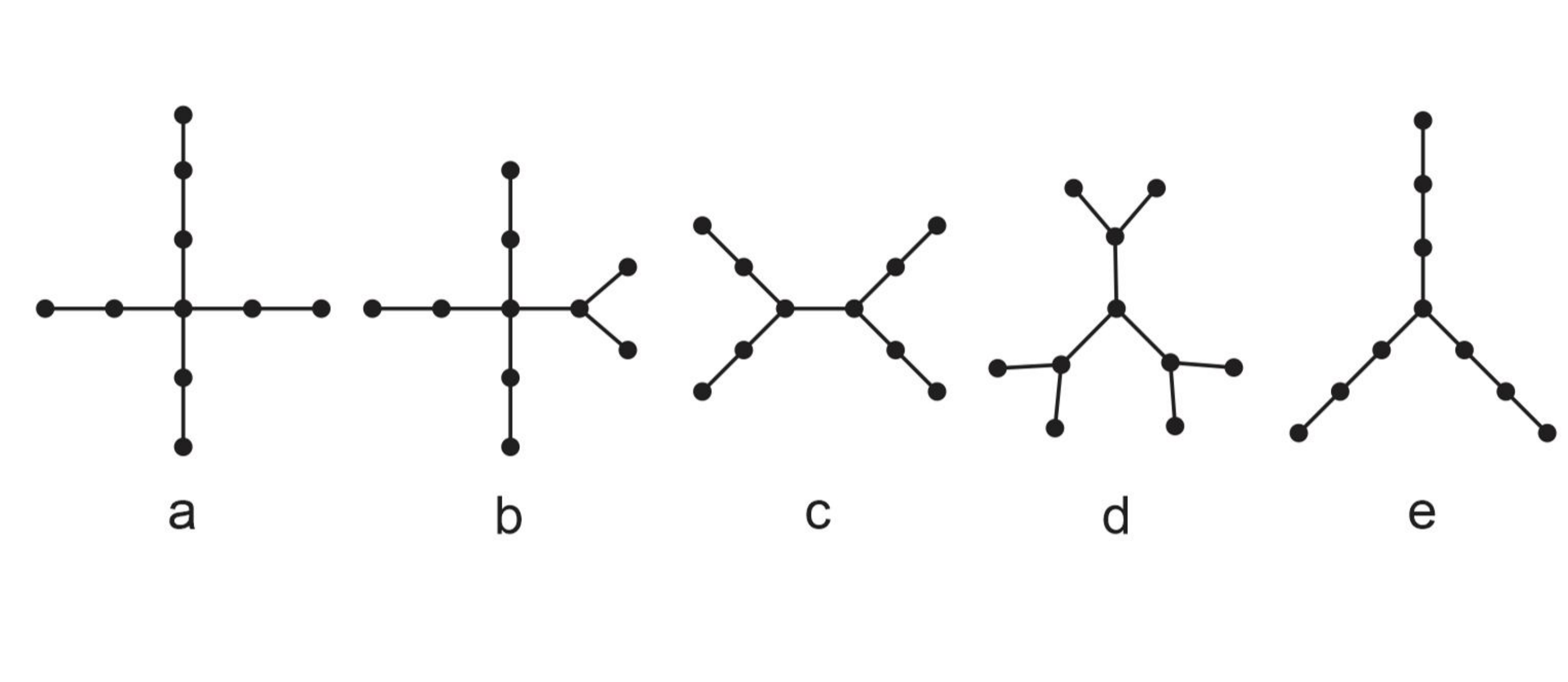}
\caption{Five counterexamples of order $10$}\label{5examples2}
\end{figure}

\noindent  {\bf Acknowledgement}. The research of the first author was partly funded by Iran National Science Foundation (INSF) under the contract No. 96004167. Also, part of the work in this paper was done while the third author was visiting Tilburg University.

%%%%%%%%%%%%%%%%%%%%
%\newpage

%\providecommand{\noopsort}[1]{}

\end{document}